
\documentclass{amsart}

\usepackage{amssymb,amsmath,amscd,amsthm,xspace,color,tocvsec2, array, 
	tikz-cd, enumitem}
\usepackage[mathscr]{euscript}
\usepackage[breaklinks=true, hidelinks]{hyperref} 
\usepackage[all, cmtip]{xy}
\usepackage[ampersand]{easylist}
\usepackage{stmaryrd}
\usepackage[myheadings]{fullpage}
\usepackage{csquotes}
\usepackage{mathtools}
\usepackage[skip=5pt, indent=10pt]{parskip}

\newcommand\cO{\mathscr{O}}

\newcommand{\s}{\hspace{.1mm}}

\newcommand{\sss}{\subsubsection{}}

\def\bC{\mathbf{C}}

\def\sfB{\mathsf{B}}
\def\wG{\widetilde{\sfG}}
\def\sfG{\mathsf{G}}

\def\sfT{\mathsf{T}}

\newcommand{\beq}{\begin{equation}}
\newcommand{\eeq}{\end{equation}}
\newcommand{\on}{\operatorname}

\theoremstyle{plain}
\newtheorem{theorem}[subsubsection]{Theorem}
\newtheorem{proposition}[subsubsection]{Proposition}
\newtheorem{lemma}[subsubsection]{Lemma}
\newtheorem{corollary}[subsubsection]{Corollary}
\newtheorem{definition}[subsubsection]{Definition}
\theoremstyle{remark}
\newtheorem{remark}[subsubsection]{Remark}

\DeclareMathOperator{\Rep}{Rep}

\DeclareMathOperator{\Hom}{Hom}
\DeclareMathOperator{\Coh}{Coh}

\DeclareMathOperator{\Perf}{Perf}
\DeclareMathOperator{\QCoh}{QCoh}
\DeclareMathOperator{\IndCoh}{IndCoh}

\DeclareMathOperator{\pt}{pt}

\DeclareMathOperator{\End}{End}

\DeclareMathOperator{\Shv}{Shv}

\DeclareMathOperator{\id}{id}

\DeclareMathOperator{\Av}{Av}

\newcommand{\OO}{\mathscr{O}}
\newcommand{\sC}{\mathscr{C}}
\renewcommand{\S}{\QCoh_\G(\wG \times_\G \wG)}
\newcommand{\sE}{\mathscr{E}}
\newcommand{\sA}{\mathscr{A}}
\newcommand{\sM}{\mathscr{M}}

\newcommand{\Hw}{\mathscr{H}_\chi}
\newcommand{\wHw}{ {}_{\chi}\mathscr{H}_\chi}
\newcommand{\wH}{ {}_{\chi}\mathscr{H}}
\renewcommand{\H}{\mathscr{H}}

\newcommand{\G}{\sfG}
\renewcommand{\mod}{\on{-mod}}
\newcommand{\I}{\mathring{I}}
\newcommand{\bs}{\backslash}
\newcommand{\Ss}{\wG \times_\G \wG}
\newcommand{\sN}{\on{nilp}}
\renewcommand{\o}{{\on{op}}}
\renewcommand{\r}{{\on{rev}}}
\newcommand{\Vect}{\on{Vect}}

\newcommand{\sD}{\mathscr{D}}
\newcommand{\Sp}{\Perf_\G(\wG \times_\G \wG)}
\newcommand{\Ds}{\mathbb{D}_{\on{S}}}
\newcommand{\Dv}{\mathbb{D}_{\on{V}}}
\newcommand{\Sc}{\Coh_\G(\wG \times_\G \wG)}
\newcommand{\Sn}{\Shv_{\sN}}
\newcommand{\lc}{^{\on{l.c.}}}

\title{Tame local Betti geometric Langlands}
\author{Gurbir Dhillon and Jeremy Taylor}
\date{}

\begin{document}
\begin{abstract}We prove a monoidal equivalence between spectral and automorphic realizations of the  universal affine Hecke category, thereby proving the tamely ramified local Betti geometric Langlands correspondence, as conjectured by Ben-Zvi--Nadler \cite{BZN07, BZN}. Specializing to the case of unipotent monodromy, this provides another argument for a fundamental theorem of Bezrukavnikov \cite{B}. 

\end{abstract}
\maketitle
\setcounter{tocdepth}{1}
 \tableofcontents  

\section{Introduction and statement of results}

\subsection{Introduction} 

\sss In this paper we establish the tame local Betti geometric Langlands correspondence, confirming a conjecture of Ben-Zvi--Nadler \cite{BZN07, BZN}. In the remainder of this subsection, we recall some context; the reader may skip it and proceed directly to its sequel for a precise formulation of our results, particularly Theorem \ref{t:mainthm}.

\sss Let $\sfG$ be a connected reductive group and $X$ a complex algebraic curve, not necessarily projective. To each of the three standard sheaf theories on $X$, namely 
\begin{enumerate}[label=(\roman*)]
    \item sheaves of complex vector spaces in the analytic topology, 
    \item algebraic D-modules, and
    \item $\ell$-adic sheaves, 
\end{enumerate}one has an associated groupoid of tamely ramified $\sfG$-connections on $X$. For example, if $X$ is projective, these are simply the groupoids of symmetric monoidal functors from $\Rep(\sfG)$ to the corresponding category of sheaves on $X$. 

\sss These three groupoids are naturally the complex points of derived algebraic stacks, commonly called the moduli spaces of (i) Betti, (ii) de Rham, and (iii) restricted variation $\sfG$-connections on $X$ with tame ramification, respectively. While the analytifications of the moduli spaces (i) and (ii) agree, by the Riemann--Hilbert correspondence, and the moduli space (iii) may be identified with a certain formal substack of both (i) and (ii), the algebraic structures of all three are markedly different, and to each one associates a corresponding version of the geometric Langlands correspondence. 

\sss Let us specialize to a cylinder $C$, i.e.,  a genus zero curve with two points removed. In this case, gluing of cylinders endows the category of sheaves on the moduli spaces of tame local systems with a monoidal structure, and this naturally acts on sheaves on tame moduli spaces for arbitrary $X$, i.e., stores the tame Hecke operators. The arising 2-category of modules for this monoidal category is the subject of the tame local geometric Langlands correspondence.

\sss If we only consider local systems on $C$ with unipotent monodromy around the two punctures, the Betti, de Rham, and restricted variation  moduli spaces are canonically isomorphic, even before analytification, and local geometric Langlands in this case was essentially proven by Bezrukavnikov in the celebrated work \cite{B}.

\sss By contrast, if we allow the monodromy to vary around the punctures, we get three genuinely different moduli spaces, and studying the resulting categories of sheaves introduces new complications not present in the case of a fixed semisimplified monodromy, particularly in the Betti and de Rham cases. 

\sss As far as we know, for non-abelian $\sfG$, no instances of geometric Langlands, local or global, in such families have been proven in the literature. The contribution of this paper is to settle the local case for tame Betti families.

\subsection{Statement of results}

\sss To describe our results, we need some standard notation. Let $k$ denote a coefficient field of characteristic zero. 

\sss Fix a split pinned connected reductive group $\sfG$ over $k$. Write $\sfB \subset \sfG$ for the corresponding Borel, and  denote the multiplicative Grothendieck--Springer resolution  by $$\wG \simeq \sfG \times_\sfB \sfB \rightarrow \sfG.$$

On the spectral side, we consider the $\infty$-category of ind-coherent sheaves on the Steinberg stack
$$\IndCoh_\sfG( \wG \times_\sfG \wG ) \simeq \IndCoh( \sfB / \sfB \times_{\sfG / \sfG} \sfB / \sfB), $$which is naturally monoidal under convolution. 

\sss Denote the Langlands dual pinned complex reductive group to $\sfG$ by $G$, with corresponding maximal unipotent subgroup $N \subset G$. Denote the complex arc and loop groups of $G$ by $$L^+G \coloneqq G(\bC[[t]])  \hookrightarrow LG \simeq G(\bC(\!(t)\!)),$$ 
and denote by $\I$ the prounipotent radical of the Iwahori subgroup, i.e., $\I$ is the preimage of $N$ under the evaluation map $L^+G \rightarrow G$. We may form the associated universal monodromic Hecke stack $\I \bs LG / \I$.

On the automorphic side, we consider the $\infty$-category of Betti sheaves of $k$-vector spaces with nilpotent singular support on the Hecke stack
$$\Shv_{\sN}(\I \bs LG / \I),$$
which is naturally monoidal under convolution.

\sss With this notation in hand, we may state the following. 

\begin{theorem} \label{t:mainthm} There is an equivalence of monoidal $\infty$-categories
$$\IndCoh_\sfG(\wG \times_\sfG \wG) \simeq \Shv_{\sN}(\I \bs LG / \I).$$
\end{theorem}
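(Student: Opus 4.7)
The plan is to follow a Whittaker-reduction strategy, analogous to Bezrukavnikov's approach in the unipotent monodromy case but extended to the universal monodromic setting. The rough idea is that $\Shv_{\sN}(\I \bs LG / \I)$ is non-commutative, while its doubly Whittaker-twisted analogue $\wHwO$ becomes commutative in an appropriate $E_2$-sense; the full Hecke category can then be recovered from the twisted one by monadic reconstruction along the Whittaker averaging adjunctions.

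First, I would introduce the Whittaker-twisted Hecke categories $\wHwO$ (doubly twisted) and $\wHO$ (singly twisted) on the automorphic side, together with their spectral counterparts obtained by Kostant--Whittaker reductions of the Steinberg stack $\wG \times_\sfG \wG$. These spectral reductions replace one or both copies of $\wG/\sfG$ by the universal centralizer $\mathsf{Z}/\sfG$ living over $\sfG /\!/ \sfG \simeq \sfT / W$. The doubly Whittaker equivalence
\[
\wHwO \;\simeq\; \IndCoh_\sfG(\mathsf{Z} \times_\sfG \mathsf{Z})
\]
is the most tractable statement: both sides are $E_2$-commutative, and each can be identified explicitly with $\QCoh$ of a derived self-intersection inside $\sfT / W$, permitting a direct comparison via their spectra or Bernstein centers.

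Second, to pass from $\wHwO$ to $\wHO$, I would show that $\wHO$ is a module category over $\wHwO$ generated by a single distinguished object (the Whittaker vacuum), and apply Barr--Beck to identify $\wHO$ with modules over its endomorphism algebra. On the spectral side this mirrors the passage from $\mathsf{Z} \times_\sfG \mathsf{Z}$ to $\mathsf{Z} \times_\sfG \wG/\sfG$. Iterating the analogous construction on the other Iwahori factor yields the full equivalence $\HO \simeq \IndCoh_\sfG(\wG \times_\sfG \wG)$, and monoidality is inherited from the compatibility of convolution with these iterated bimodule structures.

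The main obstacle will be proving monadicity of the Whittaker averaging functors in the Betti setting with nilpotent singular support: these averagings involve infinite-dimensional unipotent integrations, and one must verify that nilpotent singular support is preserved, that the requisite adjoints exist and are conservative on both sides of the correspondence, and that the bar resolution of the identity by Whittaker objects converges in the relevant $t$-structure. A secondary obstacle is establishing the doubly Whittaker equivalence monoidally rather than as plain $\infty$-categories: this reduces to matching $E_2$-structures, hence to an explicit computation of the Bernstein center of the universal monodromic Hecke category with nilpotent singular support, a computation whose correctness depends crucially on the singular support condition to rule out extraneous central elements on the automorphic side.
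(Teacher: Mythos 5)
Your overall strategy---pass to Whittaker-twisted categories, where things become computable, and then reconstruct the full Hecke category---is in the right family of ideas, but the reconstruction step as you describe it fails, and the failure is exactly where the real content of the theorem lies. The Whittaker averaging functor $\Av_\chi : \H \rightarrow \wH$ is \emph{not} conservative: its kernel is precisely the nonzero subcategory $\H^{\leqslant -\infty}$ of infinitely connective objects for the perverse $t$-structure (Theorem \ref{t:leftcomp}). So Barr--Beck along the Whittaker adjunction cannot recover $\H$ on the nose; what any such commutant or monadic construction produces is the Verdier quotient $\H/\H^{\leqslant -\infty}$, and this quotient is $\QCoh_\G(\Ss)$, \emph{not} $\IndCoh_\G(\Ss)$. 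To reach the stated theorem one must then renormalize: show that compact objects of $\H$ are carried to bounded coherent (rather than perfect) complexes on the Steinberg stack, identify them with the pseudocompact objects of the quotient, and ind-complete. Your outline flags conservativity as something ``to verify,'' but it is false, and nothing in your plan replaces it with the $t$-structure estimates (boundedness of the comparison functor on compact objects, identification of the kernel with $\H^{\leqslant-\infty}$) that actually carry the argument.

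A secondary issue is the shape of the reduction itself. The bi-Whittaker category $\wHw$ is monoidally equivalent to $\QCoh_\G(\G)$, the full adjoint quotient stack, not to sheaves on a derived self-intersection over $\sfT/\!\!/W$; and rather than iterating Barr--Beck in two stages from the doubly twisted category, one can take as input the two equivalences $\wH \simeq \QCoh_\G(\wG)$ and $\wHw \simeq \QCoh_\G(\G)$ and use the commuting actions $\wHw \circlearrowright \wH \circlearrowleft \H$ to produce in one step a monoidal functor $\H \rightarrow \End_{\wHw}(\wH)^{\r} \simeq \QCoh_\G(\Ss)$. This makes monoidality automatic, whereas in your two-stage monadic reconstruction the compatibility of convolution with the iterated bimodule structures (and the matching of $E_2$-structures on the doubly twisted categories) would each require separate, nontrivial arguments.
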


We again emphasize that if we pass on both sides to the subcategories with unipotent monodromy, this recovers a fundamental theorem of Bezrukavnikov \cite{B}.

 \begin{remark}\label{c:maincor} Let us write $\operatorname{2-IndCoh}_{\sN}(\sfG/\sfG)$ for the 2-category of ind-coherent sheaves of categories over $\sfG/\sfG$ with nilpotent singular support, and $\Shv_{\sN}(\I \bs LG / \I)\mod$ for the 2-category of $k$-linear stable, presentable categories equipped with an action of $\Shv_{\sN}(\I \bs LG / \I)\mod$.  Then an immediate consequence of Theorem \ref{t:mainthm} is that one has an equivalence of $(\infty,2)$-categories
$$\on{2-IndCoh}_{\sN}(\sfG/\sfG) \simeq \Shv_{\sN}(\I \bs LG / \I)\mod.$$    
However, as we are unaware of a written reference for the theory of $2$-IndCoh with singular supports, as developed by Arinkin, we omit a formal deduction; briefly, it follows from the fact that $\wG/\G  \rightarrow \G/\G$ is a proper map between smooth stacks, surjective on geometric points, and the codirections in $\G/\G$ orthogonal to the tangent complex of $\wG/\G$ are precisely the nilpotent codirections.   
\end{remark}

\sss Theorem \ref{t:mainthm} and Remark \ref{c:maincor} establish tame local Betti geometric Langlands, as conjectured by Ben-Zvi--Nadler. Namely, the statement of Theorem \ref{t:mainthm} appears in their work \cite{BZN07},  and recurs implicitly in their work with Francis \cite{BFN}. It was further discussed in their work \cite{BZN} introducing the Betti Langlands correspondence. The precise formulation proven in the present text appears in the recent very interesting work of Nadler--Li--Yun \cite{LNY} on functions on commuting stacks, where it is used as a formal input in their analysis.

\subsection{Idea of the proof}
\label{ss:idea}

\sss At a high level, we essentially follow the beautiful  strategy of proof due to Kazhdan--Lusztig \cite{KL} at the function-theoretic level and Arkhipov--Bezrukavnikov at the sheaf-theoretic level for unipotent monodromy \cite{AB, B}. However, in terms of the details, using some tools from higher algebra, we are able to give a proof which perhaps comes closer to  directly lifting the function-theoretic argument of Kazhdan--Lusztig to the sheaf-theoretic level than seems possible to do at the triangulated level. 

We should mention that there are many further variants, known and expected, of Theorem \ref{t:mainthm}, e.g. in the de Rham setting, with integral coefficients, or in mixed characteristic, see in particular \cite{AGLR, ALWY, BezRiche2} and references therein for a partial indication. We expect that the present approach can be useful in those settings as well.  

In the remainder of this subsection, let us describe the ingredients of the argument, old and new, in slightly more detail.

\sss As in the work of Kazhdan--Lusztig and Arkhipov--Bezrukavnikov, we prove Theorem \ref{t:mainthm} by matching the tautological module on the spectral side
\begin{align*}\QCoh_\sfG(\wG) &\circlearrowleft \IndCoh_\sfG(\wG \times_\sfG \wG) \intertext{with the appropriately defined Iwahori--Whittaker module on the automorphic side}  \Shv_{\sN}(\I, \chi \bs LG / \I) &\circlearrowleft \Shv_{\sN}(\I \bs LG / \I).\end{align*}
Here, the underlying equivalence of categories
$$\QCoh_\sfG(\wG) \simeq \Shv_{\sN}(\I, \chi \bs LG / \I)$$
was established in our previous article \cite{DT}. We recall that, again in broad strokes, this followed the cue of Kazhdan--Lusztig  and Arkhipov--Bezrukavnikov, where the latter established the unipotent analogue 
$$\QCoh_{\sfB}( \widetilde{\mathscr{N}}) \simeq \Shv(\I, \chi \bs LG / I),$$
where $\widetilde{\mathscr{N}}$ denotes the Springer resolution of the unipotent cone, and $I \subset L^+G$ the Iwahori subgroup. 

\sss In the setting of Kazhdan--Lusztig, i.e., for affine Hecke algebras rather than affine Hecke categories, the underlying vector space of the modules on the spectral and automorphic sides are both essentially the Grothendieck group of $\Rep(\sfT)$, and the action of both algebras on it are faithful. Therefore, one has to match the action of generators, and the isomorphism of algebras follows. I.e, one realizes both algebras at the `same' set of operators acting on the `same' vector space.

\sss  In formulating a version of this strategy in the categorical setting, Bezrukavnikov deploys some fundamental observations, notably an intrinsic characterization of bounded coherent sheaves as an enlargement of perfect complexes. This in particular has been highly influential on many subsequent works in this area, including the argument given below.

Even with these observations in hand, however, it seems difficult to straightforwardly adapt the original argument of Kazhdan--Lusztig when working at the level of triangulated categories. To circumvent this, Bezrukavnikov uses many further interesting maneuvers; we highlight in particular his use of the pointwise tensor product, rather than convolution, monoidal structure on the spectral side to set up the underlying equivalence of categories. 

As he writes in \cite{B}, 

\begin{quote}
In this text we follow the original plan conceived more than a decade ago and treat the
issues of homological algebra by ad hoc methods, using explicit DG models for triangulated categories of constructible sheaves based on generalized tilting sheaves. While the
properties of tilting sheaves established in the course of the argument are (in the author’s
opinion) of an independent interest, it is likely that recent advances in homotopy algebra
can be used to develop an alternative approach.
\end{quote}

\sss Below we use exactly such advances in homotopy algebra to directly obtain an isomorphism of monoidal categories from matching the modules. The basic idea of the present paper is the following. 

Na\"ively, one would like to approximately say that the action, say for definiteness on the spectral side, yields a fully faithful monoidal embedding 
\begin{equation} \label{e:inc}\IndCoh_\sfG(\wG \times_\sfG \wG) \rightarrow \on{End}(\QCoh_{\sfG}(\wG)).\end{equation}
If this held, as well as its automorphic analogue, then one could finish by matching essential images, as in the work of Kazhdan--Lusztig. I.e., one would have realized both monoidal categories as the `same' category of endofunctors acting on the `same' category.

This does not quite work, as, up to renormalization issues, the map \eqref{e:inc} identifies with the pushforward of quasicoherent sheaves along the map
$$(\wG \times_\sfG \wG)/ \sfG \rightarrow \wG/\sfG \times \wG/\sfG,$$
and in particular is not fully faithful, already for a torus. To address this, we note that we instead have 
$$\QCoh_\sfG(\wG \times_{\sfG} \wG)  \simeq \on{End}_{\QCoh_\G(\sfG)}(\QCoh_\G(\wG)),$$
i.e., we need to consider the commuting action of $\QCoh_\G(\G)$ on $\QCoh_\G(\wG)$ as well. 

For this reason, in our previous article \cite{DT}, we provided a monoidal equivalence between $\QCoh_\G(\G)$ and an appropriately defined bi-Whittaker category $\Shv_{\sN}(\I, \chi \bs G_F / \I, \chi)$; we note that in the unipotent case analogous results were obtained in \cite{BezNilp}, \cite{CD}. With this in hand, the above na\"ive argument can be adapted to prove Theorem \ref{t:mainthm}. Namely, the commuting actions 
$$\Shv_{\sN}(\I, \chi \bs LG / \I, \chi) \circlearrowright \Shv_{\sN}(\I, \chi \bs LG / \I) \circlearrowleft \Shv_{\sN}(\I \bs LG / \I)$$
then directly give rise to a monoidal functor 
\begin{equation} \label{e:mon}\Shv_{\sN}(\I \bs LG / \I) \rightarrow \QCoh_\G(\wG \times_\G \wG),\end{equation}
which up to renormalization issues is the sought-for equivalence. That is, we realize the two sides of Theorem \ref{t:mainthm} as the `same' category of equivariant endofunctors of the `same' category, which is the promised lift of the strategy of Kazhdan--Lusztig.

\sss As far as we are aware, the proof given in this paper is new, even in the unipotent case. However, we would like to mention some antecedents.

First, we should note our approach to constructing the functor was influenced by unpublished strategies of Ben-Zvi--Nadler, Gaitsgory--Lurie, and Arinkin--Bezrukavnikov for the spherical case, which reprove the derived Satake equivalence of Bezrukavnikov--Finkelberg \cite{BF} via acting on the spherical Whittaker category; we highlight the approach of Gaitsgory--Lurie was recently successfully realized by Campbell--Raskin \cite{CR}. 

Second, after constructing the functor, one must handle the arising renormalization issues to obtain an equivalence. Compared to the analysis found in \cite{B}, we have opted to give `soft analysis' proofs throughout, e.g. showing the $t$-boundedness of certain functors without explicitly bounding the amplitude. Moreover, the proofs of the $t$-boundedness essentially reduce to the fact that Wakimoto sheaves, which correspond to the lattice in the affine Weyl group, lie in the heart of both the automorphic and spectral $t$-structures, and the remaining finite Weyl group is, true to its name, finite. In particular, Lusztig's $a$-function does not appear in our analysis below. With that said, we should emphasize that our approach nonetheless was greatly influenced by the arguments in \cite{B}, particularly the striking Lemma 38.

	\subsection{Acknowledgments} It is a pleasure to thank Pramod Achar, David Ben-Zvi, Jens Eberhardt, Arnaud Eteve, 
	Joakim F\ae rgeman, Dennis Gaitsgory, Yau Wing Li, Ivan Losev,  David Nadler, Sam Raskin, Simon Riche, David Yang, Zhiwei Yun, and Xinwen Zhu for useful correspondence and discussions. We especially thank Harrison Chen, with whom many of the basic ideas of the present paper were jointly formulated, and Roman Bezrukavnikov, for several conversations which greatly influenced our understanding of this subject. G.D. was supported by an NSF Postdoctoral Fellowship under grant No. 
	2103387. J.T. was partially supported by NSF grant DMS-1646385.

\section{Preliminaries}

In this section, we recall some basic properties of the two sides of the equivalence, as well as the main results of \cite{DT}. These are then applied in Section \ref{s:proofthm} to prove Theorem \ref{t:mainthm}.

\subsection{Generalities}

\sss By a category $\sC$, we will by default mean a $k$-linear stable, presentable $\infty$-category, and by a functor we will mean one commuting with arbitrary colimits. Given two objects $c$ and $d$ of a category $\sC$, we will denote by $\on{Hom}_\sC(c,d)$ the corresponding complex of $k$-vector spaces.

In order to discuss compact objects or variants thereof, by a non-cocomplete category we mean a $k$-linear stable $\infty$-category, and by functors between such we mean those commuting with finite colimits. 

As a basic example, for a $k$-algebra $A$ by $A\mod$ we mean the standard $\infty$-category whose underlying homotopy category is the unbounded derived category of all $A$-modules, not necessarily finitely generated. 

The Lurie tensor product endows the totality of categories with the structure of a symmetric monoidal $\infty$-category with unit $\Vect := k\mod$. We denote the underlying binary product of two categories $\sC$ and $\sD$ by $\sC \otimes \sD$, and given objects $c$ of $\sC$ and $d$ of $\sD$, we denote the corresponding external tensor product object of $\sC \otimes \sD$ by $c \boxtimes d$.  Below, when we speak of monoidal categories, their modules, equivariant maps, etc., it is with respect to this structure.

\sss Given a monoidal category $\sA$, let us denote the binary product on its objects by 
$$- \star -: \sA \otimes \sA \rightarrow \sA, \quad \quad a \boxtimes b \mapsto a \star b.$$
We denote by $\sA^{\r}$ the monoidal category obtained by passing to the reversed multiplication; in particular as categories we have $\on{id}: \sA \simeq \sA^{\r}$.

If we denote the monoidal unit of $\sM$ by $1$, we recall that an object $a$ of $\sA$ is left dualizable if there exists another object $a^\vee$ and maps $1 \rightarrow a \star a^\vee$ and $a^\vee \star a \rightarrow 1$ satisfying the usual identities.

\sss  For a category $\sC$, we denote by $\sC^c \subset \sC$ the full subcategory of compact objects. 

\sss \label{s:tstruc} For a category $\sC$ equipped with a $t$-structure, and an integer $n$, let us write $\sC^{\leqslant n}$ and $\sC^{\geqslant n}$ for the corresponding full subcategories of $\sC$, and $\tau^{\leqslant n}$ and $\tau^{\geqslant n}$ for the corresponding truncation functors, so that we have a distinguished triangle of endofunctors
$$\tau^{< n} \rightarrow \on{id} \rightarrow \tau^{\geqslant n} \xrightarrow{+1},$$
where $\tau^{< n} := \tau^{\leqslant (n-1)}.$ It will be very convenient to also adopt the following nonstandard shorthand: {\em for an object $c$ of $\sC$, we will write $c \leqslant n$ or $c \geqslant n$ if $c$ belongs to $\sC^{\leqslant n}$ or $\sC^{\geqslant n}$, respectively. }

Given a category $\sC$ equipped with a $t$-structure, we may consider in particular the full subcategory of infinitely connective objects 
$$\sC^{\leqslant -\infty} := \underset{n}\cap  \hspace{.5mm} \sC^{\leqslant n}.$$

\sss Given a functor $F: \sC \rightarrow \sD$ between categories equipped with $t$-structures, and integers $a, b \in \mathbb{Z}$, recall that $F$ is said to have amplitude at most $[a,b]$ if $F(\sC^{\leqslant 0}) \subset \sD^{\leqslant b}$ and $F(\sC^{\geqslant 0}) \subset \sD^{\geqslant a}$.

\subsection{Spectral preliminaries}

\sss  We now collect some basic ingredients on the spectral side. Recall the  tautological line bundles on $\wG$, i.e., the monoidal functor
$$\Rep(\sfT) \rightarrow \Rep(\sfB) \rightarrow \QCoh_\G(\wG), \quad \quad k_\lambda \mapsto \OO(\lambda).$$
We follow the convention that the bundles indexed with strictly dominant weights, i.e.,  $\OO(\lambda),$ for $\lambda \in \Lambda^{++}$, are relatively ample for the projection $\wG \rightarrow \G$. Tensoring by these bundles in turn gives rise to a monoidal functor
$$\Rep(\sfT) \rightarrow \Rep(\sfB) \rightarrow \QCoh_\G(\wG) \rightarrow \End_{\QCoh_\G(\G)}(\QCoh_{\G}(\wG)) \simeq \S, \quad \quad \lambda \mapsto \Delta_*(\OO(\lambda)),$$
where $\Delta: \wG \rightarrow \wG \times_\G \wG$ denotes the diagonal embedding. Here, we recall the final equivalence is a case of the basic assertion, due to Ben-Zvi--Francis--Nadler \cite{BFN10}, that for a map of perfect stacks $X \rightarrow Y$, if we view $\QCoh(X)$ as a $\QCoh(Y)$ module, we have a canonical equivalence 
$$\on{End}_{\QCoh(Y)}(\QCoh(X)) \simeq \QCoh(X \times_Y X).$$

\sss Consider the  tautological line bundles on $\wG \times_\G \wG$, i.e., the (non-monoidal) functor
$$\Rep(\sfT \times \sfT) \rightarrow \S, \quad \quad k_\lambda \boxtimes k_\mu \mapsto \OO(\lambda, \mu) := \OO(\lambda) \boxtimes \OO(\mu).$$
Note that, for any object $\sE$ of $\S$, we have a canonical isomorphism
$$\Delta_*(\OO(\lambda)) \star \sE \star \Delta_*(\OO(\mu)) \simeq \sE \otimes \OO(\lambda, \mu) =: \sE(\lambda, \mu).$$

Similarly, we have the action of tensoring with tautological vector bundles, i.e., the monoidal functor $$\Rep(\G) \rightarrow \Rep(\sfB) \rightarrow \S, \quad \quad V \mapsto V \otimes \Delta_* (\OO).$$

\sss \label{s:swap}If we write $\sigma: \Ss \simeq \Ss$ for the involution swapping the two factors of the fiber product, we recall that the associated involution of its category of quasicoherent sheaves underlies a monoidal equivalence
\begin{equation} \sigma_*: \S \simeq \S^\r. \label{e:swap}\end{equation}

\sss Let us denote the full subcategories of perfect and bounded coherent complexes by 
$$\Sp \hookrightarrow \Sc \hookrightarrow \S.$$
We recall that $\S$ is compactly generated by $\Sp$. Moreover, the $t$-structure on $\S$ may be characterized by $$\xi \geqslant 0 \quad \text{if and only if} \quad  \Hom( \OO(\lambda, \mu), \xi) \geqslant 0, \quad \quad \text{for all }  \lambda, \mu \in \Lambda.$$

\sss Let us denote the functor of Serre duality by 
$$\Ds: \Sc \simeq \Sc^\o.$$
Recall this functor has amplitude $[0, \dim \G].$

We recall that $\Sc$ is exactly the (left) dualizable objects of $\S$, and moreover one has explicitly for any $\sE$ in $\Sc$ a canonical isomorphism
$$\sE^\vee \simeq \sigma_* \circ \Ds (\sE). $$
Moreover, as $(\Ss)/\G$ is Calabi-Yau, i.e., $\OO_{(\Ss)/\G} \simeq \omega_{(\Ss)/\G},$ it follows that duality preserves the full subcategory of perfect complexes.

\subsection{Automorphic preliminaries I: the perverse $t$-structure}

\label{s:aut1}
\sss Let us consider the category of Betti sheaves with nilpotent singular support on $\I \bs LG / \I$, which we will denote by 
$$\H := \Shv_{\sN}(\I \bs LG / \I).$$We will presently recall its definition and some of its basic properties. 

\sss Let us denote the abstract Cartan by $T \simeq I/\I$, and its cocharacter lattice by $\Lambda$.  Let us write $R := k[\Lambda]$ for the group algebra of the latter, i.e., the algebra of regular functions on $\sfT$. Let us denote the submonoids of antidominant and dominant weights by $\Lambda^- \subset \Lambda$ and $\Lambda^+ \subset \Lambda$, respectively. Let $\Lambda^{--} \subset \Lambda^-$ and $\Lambda^{++} \subset \Lambda^+$ denote the strictly antidominant and strictly dominant weights, respectively.

Denote by $W_f$ the finite Weyl group, and by $W$ the (extended) affine Weyl group, so that  $W \simeq W_f \ltimes \Lambda$. Let us write $\ell: W \rightarrow \mathbb{Z}^{\geqslant 0}$ for the standard length function on the affine Weyl group, and $\leqslant$ for the Bruhat order. We recall that for any $w \in W$, the set $\{y: y \leqslant w\}$ is finite.

\sss Recall that $LG$ is stratified by the double cosets $IwI$, for $w \in W$. Explicitly, each $IwI/\I$ is a smooth complex manifold of dimension $d_w := \ell(w) + \dim T$, hence the same is true of any universal cover $$\pi_w: \widetilde{IwI/\I} \rightarrow IwI/\I.$$Moreover, the object $\pi_{w.!}(\underline{k}[d_w])$ is a compact generator of the category of local systems $\Sn(\I \bs IwI/ \I),$ and left convolution with it\footnote{Of course, one could work equally well with the right convolution action.} yields a $t$-exact equivalence
$$\Sn(\I \bs IwI / \I) \simeq \Sn(\I \bs I / \I) \simeq \Sn(T) \simeq R\mod;$$
here the final monoidal equivalence is the Mellin transform, cf. Section 2.1 of \cite{BZN}. That is, $\Sn(\I \bs IwI / \I)$ is a free module of rank one over $R\mod$, and any choice of universal cover affords a trivialization.

\sss Write $j_y: IyI/\I \rightarrow LG/\I$ for the inclusion of a stratum, and note its closure $i_y: X_y \rightarrow LG/\I$ is an $\I$-stable union of finitely many double cosets, namely $IxI/\I, x \leqslant y$. We may consider the associated category $$\Sn(\I \bs X_y / \I)$$ of $\I$-equivariant Betti sheaves on $X_y/\I$ with nilpotent singular support, i.e., sheaves of $k$-vector spaces in the analytic topology whose $*$-restriction (or equivalently, $!$-restriction) to each $\Shv(\I \bs IxI/\I)$ lies in $\Sn(\I \bs IxI/\I).$

We have $LG \simeq \varinjlim X_y$, and similarly 
\begin{equation} \H \simeq \varinjlim \Sn(\I \bs X_y / \I) =: \varinjlim \H_{\leqslant y}, \label{e:closed}\end{equation}
where the transition functions in the appearing colimit are given by pushforward along the closed embeddings $X_y \rightarrow X_z, y \leqslant z$. In particular, for any object $\eta$ of $\H$, we have that 
$$ \varinjlim i_{y, !} \circ  i_y^! (\eta) \xrightarrow{\sim} \eta.$$Moreover, the perverse $t$-structure is characterized by  $\eta \geqslant 0$ if and only if each $i_y^! (\eta) \geqslant 0$ for each $y \in W$, where we equip $\Shv_{\sN}(X_y / \I)$ with its usual perverse $t$-structure, cf. Equation \eqref{e:perv} below for an equivalent characterization.

\sss For a fixed stratum, consider the associated functors
\begin{equation} \label{e:push}j_{w,!}: \Sn(\I \bs IwI / \I) \rightarrow \H \quad \on{and} \quad j_{w, *}: \Sn(\I \bs IwI / \I) \rightarrow \H.\end{equation}
We may consider a standard object $\Delta_w := j_{w,!} \circ \pi_!(\underline{k}[d_w])$, which is well-defined up to non-unique isomorphism, and similarly we have a costandard object $\nabla_w := j_{w,*} \circ \pi_!(\underline{k}[d_w])$. More precisely, we have non-contractible groupoids of standard and costandard objects $\Delta_w, \nabla_w$, each canonically equivalent to $\pt / \Lambda$, where we think of $\Lambda$ as the fundamental group of $IwI/\I$, and note it is abelian to get the canonical isomorphism. Note the vanishing 
\begin{equation}\label{e:van}\Hom(\Delta_y, \nabla_w) \simeq \begin{cases} \mathscr{L}, & y = w, \\ 0, & y \neq w,\end{cases}\end{equation}
where $\mathscr{L}$ is a trivializable $R$-module of rank one.

\sss The standard objects $\Delta_w, w \in W$, compactly generate $\H$, and the perverse $t$-structure on it may also be characterized by \begin{equation} \label{e:perv}\xi \in \H \geqslant 0 \quad \text{if and only if} \quad \Hom(\Delta_w, \xi) \geqslant 0, \quad \text{for all } w \in W.\end{equation}
As the monoidal unit $1 \simeq \Delta_e$ is compact, and each $\Delta_w$ is invertible with inverse $\nabla_{w^{-1}}$, it follows straightforwardly that the costandard objects are again compact generators. Moreover, we have the following basic lemma; a proof is given in Section 4.3 of \cite{DT}.

\begin{lemma}\label{l:pushexact} The functors \eqref{e:push} are $t$-exact, and in particular the objects $\Delta_w$ and $\nabla_w$ are perverse. 
\end{lemma}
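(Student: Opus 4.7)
The plan is to exploit the affine paving of Schubert varieties and reduce $t$-exactness to a standard Artin vanishing argument. First, I would factor the locally closed embedding as $j_w \simeq i_w \circ \widetilde{j}_w$, where $\widetilde{j}_w: IwI/\I \hookrightarrow X_w/\I$ is the open embedding of the stratum into its Schubert-variety closure, and $i_w: X_w/\I \hookrightarrow LG/\I$ is the ind-closed embedding. Under the identification \eqref{e:closed}, pushforward along $i_w$ is the canonical functor into the colimit, which is $t$-exact (and here $i_{w,!} \simeq i_{w,*}$), so the question reduces to establishing that $\widetilde{j}_{w,!}$ and $\widetilde{j}_{w,*}$ are $t$-exact.

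The key structural input is that $\widetilde{j}_w$ is an affine morphism: its complement $X_w \setminus IwI = \bigcup_{y < w} IyI$ is the vanishing locus of a section of an ample line bundle on the projective Schubert variety $X_w$. This is the classical affine paving property. Given affineness, Artin vanishing yields that $\widetilde{j}_{w,*}$ is right $t$-exact for the perverse $t$-structure, and since $\widetilde{j}_{w,*}$ is left $t$-exact by general principles for a locally closed embedding, it is $t$-exact; by Verdier duality on the Schubert variety, $\widetilde{j}_{w,!}$ is likewise $t$-exact.

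I would then check that the operations $\widetilde{j}_{w,!}$ and $\widetilde{j}_{w,*}$ (and their compositions with $i_{w,*}$) preserve the ambient constructibility constraints: nilpotence of singular support is automatic, since for any constructible sheaf relative to the Schubert stratification the singular support lies in the union of conormal bundles to $\I$-orbits, which is precisely the nilpotent locus; and $\I$-equivariance is preserved tautologically. With $t$-exactness of the functors \eqref{e:push} in hand, the perversity of $\Delta_w$ and $\nabla_w$ follows at once, since under Mellin transform $\Sn(\I \bs IwI/\I) \simeq R\mod$ the object $\pi_{w,!}(\underline{k}[d_w])$ corresponds to the free module $R$ placed in degree $0$, hence is perverse.

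I anticipate the only mild subtlety is verifying the affineness claim in the universal monodromic Betti context; but since this is a property of the underlying stratified space $X_w/\I$ rather than the coefficient system, the classical fact applies unchanged, and the $t$-exactness conclusions transfer via the stratum-by-stratum characterization of the perverse $t$-structure recorded in Section \ref{s:aut1}.
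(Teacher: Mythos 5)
Your identification of the key geometric input---affineness of the stratum embeddings and Artin vanishing---is the same as the paper's, but two steps of your outline do not survive the passage from field coefficients to the universal monodromic setting, and those are exactly the places where the paper's proof does its work. The more serious one is the deduction of $t$-exactness of $\widetilde{j}_{w,!}$ from that of $\widetilde{j}_{w,*}$ ``by Verdier duality.'' In this setting the duality $\Dv$ is \emph{not} perverse $t$-exact: by Lemma \ref{l:verdamp} it has amplitude $[0,\dim T]$, reflecting the fact that the coefficients on each stratum form $R\mod$ with $R=k[\Lambda]$ of global dimension $\dim T$, so that $R\Hom_R(-,R)$ is not exact (and the duality is in any case only defined on compact objects). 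Dualizing a $t$-exactness statement therefore only yields $t$-exactness up to a shift by $\dim T$, which is strictly weaker than the lemma whenever $\sfT$ is nontrivial. The paper instead treats the two functors by separate arguments: right exactness of $j_{w,!}$ follows by adjunction from the characterization \eqref{e:perv} of the $t$-structure (which says $j_w^!$ is left exact), and left exactness of $j_{w,*}$ follows directly from the vanishing \eqref{e:van}; only the remaining two half-exactnesses require the affineness input.

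The second gap is the assertion that ``the classical fact applies unchanged.'' The local systems on a stratum here are arbitrary, possibly infinitely generated, $R$-modules, whereas the classical perverse-exactness statements for affine morphisms are proved for constructible sheaves with field coefficients; already $H^\ast_c$ of $(\mathbb{C}^\ast)^{\dim T}$ with the universal local system is spread over $\dim T+1$ degrees, so the numerics are genuinely different from the simply connected case. The paper's proof handles this by devissage: using compactness of the (co)standard objects and the perfectness over $R$ of Hom complexes between compact objects (a consequence of \eqref{e:van}), one reduces to coefficients of the form $R/I[-n]$ and then, by base change along closed points $R\to k$, to the classical statement for a fixed monodromy, where the affineness of $IwI/\I$ finally enters. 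Your outline would close up if you replaced the duality step by the two adjunction arguments above and supplied this devissage for the remaining two half-exactnesses.
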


\sss As $\H$ is compactly generated by invertible objects, it follows that every compact object of $\H$ is dualizable. If we write $\on{inv}: LG \simeq LG$ for the inversion map $g \mapsto g^{-1}$, let us denote the associated anti-equivalence of $\H^c$ by 
$$\Dv : \H^c \simeq \H^{c, op}, \quad \quad \xi \mapsto \on{inv}_*(\xi^\vee);$$
see also the interesting paper \cite[Section 5.3]{EE} for a less {\em ad hoc} construction of the duality.

Note that, by construction $\Dv(\Delta_w) \simeq \nabla_w$ and $\Dv(\nabla_w) \simeq \Delta_w$, and for two compact objects $\xi, \zeta$ one has a canonical equivalence $$\Dv(\xi \star \zeta) \simeq \Dv(\xi) \star \Dv(\zeta).$$

 It will be useful in the sequel to have the following estimate. 

 \begin{lemma}\label{l:verdamp} The functor $\Dv$ has cohomological amplitude $[0, \dim T]$. \end{lemma}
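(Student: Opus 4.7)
We first decompose $\Dv = \on{inv}_* \circ (-)^\vee$. Since inversion is an automorphism of $LG$, the pushforward $\on{inv}_*$ is $t$-exact, so it suffices to bound the amplitude of the monoidal dualization $(-)^\vee \colon \H^c \to \H^{c,\o}$. On the invertible generators, this functor is already $t$-exact: by construction $\Delta_w^\vee \simeq \nabla_{w^{-1}}$ and $\nabla_w^\vee \simeq \Delta_{w^{-1}}$, all of which are perverse by Lemma~\ref{l:pushexact}. Thus the amplitude $\dim T$ must originate from the $R\mod$-linear structure on $\H$ induced by convolution with the unit stratum $\Sn(\I \bs I/\I) \simeq R\mod$.

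To exploit this, we use the stratification $\H \simeq \varinjlim_y \H_{\leqslant y}$ from \eqref{e:closed} and the associated Cousin-type filtration: any compact $\xi \in \H^c$ is supported on finitely many strata and admits a finite filtration whose subquotients take the form $j_{w, !}(M_w)$ for perfect $R$-modules $M_w \in (R\mod)^c$, via the Mellin equivalence $\Sn(\I \bs IwI/\I) \simeq R\mod$. On each subquotient, using the $t$-exactness of $j_{w, !}$ (Lemma~\ref{l:pushexact}) and the invertibility of $\Delta_w$ with inverse $\nabla_{w^{-1}}$, the monoidal dual decomposes as $M_w^\vee \star \nabla_{w^{-1}}$, where $M_w^\vee = \RHom_R(M_w, R)$ is the $R$-module dual and $\nabla_{w^{-1}}$ is perverse and invertible.

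The key input is then the amplitude of the $R$-module dualization itself: since $R = k[\Lambda]$ is a regular commutative Noetherian ring of global homological dimension $\dim T$, the functor $\RHom_R(-, R)$ on perfect $R$-modules has cohomological amplitude $[0, \dim T]$ (i.e., sends the heart into $R\mod^{[-\dim T, 0]}$). Combined with the $t$-exactness of convolution with invertible perverse objects, this yields the desired amplitude on each subquotient; the amplitude bound then propagates to $\xi^\vee$ along the dualized Cousin filtration, using that iterated extensions preserve a fixed amplitude range $[0, \dim T]$.

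\textbf{Main obstacle.} The subtlest point is the assembly step: one must verify that the Cousin-type filtration of $\xi$ and its dualization interact cleanly, so that the stratum-wise estimate does not accumulate extra shifts, and that the reduction from the $\H$-monoidal dual to the $R$-module dual on each stratum is compatible with the perverse $t$-structure and the invertibility of the $\Delta_w$'s.
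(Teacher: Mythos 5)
Your proposal follows essentially the same route as the paper's proof: reduce to the minimal stratum, where $\Dv$ is $\on{inv}_*$ composed with na\"ive duality of perfect $R$-modules and the amplitude $[0,\dim T]$ comes from the regularity of $R = k[\Lambda]$; then treat a single stratum by writing the object as $\Delta_w \star \sE$ (equivalently $j_{w,!}(M_w)$) and using the invertibility of the (co)standards together with the $t$-exactness of $j_{w,!}$ and $j_{w,*}$ from Lemma \ref{l:pushexact}; then assemble over the finitely many strata in the support.

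The one incomplete point is precisely the assembly step you flag as the main obstacle, and it does require an adjustment rather than just a verification. A single Cousin filtration with subquotients of the form $j_{w,!}(M_w)$ does not yield both halves of the bound. If $\xi$ is connective, the subquotients of the $!$-filtration are connective (the $*$-restrictions to strata are right exact and $j_{w,!}$ is exact), each dualizes into the allowed range, and the range $[0,\dim T]$ is stable under extensions, so that half goes through. But if $\xi$ is merely coconnective, its $*$-restrictions to the non-open strata of its support need not be coconnective, so the subquotients $j_{w,!}(M_w)$ can have uncontrolled connective amplitude and the extension argument gives nothing. The paper's proof resolves this by using two filtrations: connective compact objects are filtered with connective subquotients $!$-extended from single strata, while coconnective compact objects are filtered with coconnective subquotients $*$-extended from single strata (using that the $!$-restrictions are left exact and $j_{w,*}$ is exact); each of the two estimates is then checked against its own filtration. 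With that modification your argument is complete. As a minor point, your two glosses of the amplitude of $\RHom_R(-,R)$ are inconsistent with each other: for $M$ in the heart, $\Ext_R^i(M,R)$ is concentrated in cohomological degrees $0$ through $\dim T$, i.e., the image of the heart lies in degrees $[0,\dim T]$, not $[-\dim T,0]$.
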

Note that, as the perverse $t$-structure restricts to a bounded $t$-structure on compact objects, the statement of the lemma is equivalent to the assertion that if a compact object $\xi$ is moreover perverse, then $\Dv(\xi)$ has perverse amplitude at most $[0, \dim T]$.

 \begin{proof}[Proof of Lemma \ref{l:verdamp}] Let us first consider the full subcategory of objects supported on the minimal stratum 
 $$\Sn(\I \bs I / \I) \simeq \Sn(T) \simeq R\mod \simeq \QCoh(\sfT).$$
Here, note that the functor of dualizing a compact object, $\xi \mapsto \xi^\vee$, is given in $\QCoh(\sfT)$ by na\"ive duality of perfect complexes, i.e., $\sE^\vee \simeq \Hom(\sE, \OO_{\sfT}).$
As the Mellin transform exchanges pushforward along inversion on $T$ with pushforward along inversion on $\sfT$, it follows that $\Dv$ corresponds to the operation 
$$\QCoh(\sfT)^c \simeq \QCoh(\sfT)^{c, op}, \quad \quad \sE \mapsto \on{inv}_* \Hom(\sE, \OO_{\sfT}) =: \Dv(\sE),$$
and in particular has cohomological amplitude $[0, \dim T]$. 

We claim the lemma now follows for objects $!$-extended from a single stratum $IwI$. Indeed, such an object $\xi$ may be uniquely written as $\xi \simeq \Delta_w \star \sE$, for $\sE \in \QCoh(\sfT)^c$. It follows that 
$$\Dv(\xi) \simeq \Dv(\Delta_w \star \sE) \simeq \Dv(\Delta_w) \star \Dv(\sE) \simeq j_{w, *}(\Dv (\sE)),$$
whence the cohomological estimate holds by previous analysis and the exactness of $j_{w, *}$. A similar argument yields the lemma for objects $*$-extended from a single stratum. 

Finally let us address the general case. Let $\xi$ be a compact object of $\H$. We must show that $\xi \leqslant 0$ implies $\Dv(\xi) \leqslant \dim T$, and $\xi \geqslant 0$ implies $\Dv(\xi) \geqslant 0$. To see the first claim, note that a connective compact object admits a finite filtration with successive quotients connective compact objects $!$-extended from a single stratum, so the first assertion reduces to a previously shown case. Similarly, a coconnective compact object admits a finite filtration with successive quotients coconnective compact objects $*$-extended from a single stratum, so the latter estimate again reduces to a previously shown case. 
 \end{proof}

\subsection{Automorphic preliminaries II: Wakimoto sheaves}
\label{s:aut2}
\sss Recall that previously we only defined the (co)standard objects up to non-unique isomorphism, as a general stratum $IwI$ does not have a canonical choice of basepoint. However, for $\lambda \in \Lambda \subset W$, the stratum $I\lambda I$ contains a canonical point associated to our global coordinate $t$, namely $t^\lambda \in LG$, hence we have a canonical choice of $\Delta_{\lambda}$ and $\nabla_{\lambda}$, which we denote by $\Delta_{\lambda}^{\on{can}}$ and $\nabla_{\lambda}^{\on{can}}$, respectively. Namely $\Delta_\lambda^{\on{can}}$ and $\nabla_\lambda^{\on{can}}$ are rigidified by specifying an identification of their $!$-fibers at $t^{\lambda}$ with $R$.

\sss Recall that if we write $2\check{\rho}$ for the sum of the positive roots of $G$, we have for any $\lambda \in \Lambda$ that $\ell(t^\lambda) = \lvert \langle \lambda, 2 \check{\rho} \rangle \rvert$. In particular, for $\lambda, \mu \in \Lambda^+$, as $t^{\lambda} \cdot t^{\mu} = t^{\lambda + \mu}$ in $LG$, and their lengths add, we have a canonical isomorphism  
$$\alpha_{\lambda, \mu}: \nabla_\lambda^{\on{can}} \star \nabla_\mu^{\on{can}} \simeq \nabla_{\lambda + \mu}^{\on{can}},$$
which is associative in the evident sense for triples $\lambda, \mu, \nu \in \Lambda^+$, and similarly for standard objects $\Delta^{\on{can}}_\lambda, \lambda \in \Lambda$.

\sss Let us write $\Rep(\sfT)^+$ for the full subcategory generated by the objects $k_\lambda, \lambda \in \Lambda^+$; this is a monoidal subcategory. The objects $\nabla_\lambda^{\on{can}}$, $\lambda \in \Lambda^+$ together with the isomorphisms $\alpha_{\lambda, \mu}$ yield a monoidal functor $$\Rep(\sfT)^+ \rightarrow \H, \quad \quad k_\lambda \mapsto \nabla_\lambda^{\on{can}},$$
which extends by the invertibility of the costandard objects uniquely to a monoidal functor $$\Rep(\sfT) \rightarrow \H, \quad \quad k_\mu \mapsto W_\mu,$$ the resulting objects $W_\lambda, \lambda \in \Lambda$,  are the {\em Wakimoto sheaves}. We in particular have canonical isomorphisms 
$$W_\lambda \simeq \nabla_\lambda^{\on{can}}, \quad \lambda \in \Lambda^+, \quad \quad W_\lambda \star W_\mu \simeq W_{\lambda + \mu}, \quad \lambda, \mu \in \Lambda, \quad \quad W_\lambda \simeq \Delta_\lambda^{\on{can}}, \quad \lambda \in \Lambda^-.$$

\sss \label{s:whit}It will be important for us to recall that one can attach Wakimoto sheaves to every element of $W$, which is done as follows. Recall that in \cite{LNY}, \cite{IY}, \cite{T}, \cite{DT}, a universal Whittaker right module for the finite Hecke category was constructed, along with a canonical isomorphism 
$$\Sn( N^-, \chi \bs G / N ) \simeq R\mod.$$
Let us denote the underlying binary product of the right module structure by 
$$\Sn(N^-, \chi \bs G / N) \otimes \Sn(N \bs G / N) \rightarrow \Sn(N, \chi \bs G / N), \quad \quad \sM \boxtimes \mathscr{N} \mapsto \sM \star \mathscr{N}.$$
We also recall that acting on the object $R$ yields an adjunction 
$$\on{Av}_\chi^L:  \Sn(N^-, \chi \bs G / N) \leftrightarrows \Sn(N \bs G / N) : \on{Av}_\chi,$$
wherein $\on{Av}_\chi^L$ is conservative, and identifies the Whittaker category with left comodules for the big tilting coalgebra object $\Xi$ in $\Sn(N \bs G / N)$. In particular, it carries a unique $t$-structure for which $\on{Av}_\chi^L$ is $t$-exact; we refer to this as the perverse $t$-structure in what follows.

\sss Having introduced the universal finite Whittaker model, let us recall its affine analogue. The latter is obtained simply by induction 
$$\wH := \Sn(\I^-, \chi \bs G_F / \I) := \Sn(N^-, \chi \bs G / N) \underset{\Sn(N \bs G / N)} \otimes \Sn(\I \bs G / \I).$$
In particular, we still have an adjunction 
$$\Av_{\chi}^{L} : \wH \rightleftarrows \H: \Av_\chi,$$
which again identifies the Iwahori--Whittaker category with left comodules for $\Xi$. Again $\wH$ inherits a unique $t$-structure for which $\on{Av}_{\chi}^L$ is $t$-exact, which we refer to as the perverse $t$-structure. 

The corresponding category of bi-Whittaker sheaves is simply defined as its endomorphisms
$$\wHw := \Sn(\I^-, \chi \bs G_F / \I^-, \chi) := \End_{\H^\r\mod}(\wH).$$

\subsection{Recollections from \cite{DT}}

\sss Finally, let us recall the relevant results obtained in \cite{DT} connecting the spectral and automorphic sides. 

\sss For a monoidal category $\sA$, write $Z(\sA)$ for its center. Recall that Gaitsgory's nearby cycles construction, together with the monodromy automorphism, gives rise to a monoidal functor
$$Z: \QCoh_{\G}(\G) \rightarrow Z(\H),$$
see \cite[Section 15.4]{DT} for the details in the universal monodromic setting. In particular, we have Gaitsgory's central sheaves, i.e, the composition 
$$\Rep(\G) \rightarrow \QCoh_{\G}(\G) \rightarrow Z(\H), \quad \quad V \mapsto Z_V.$$

\sss The first basic result we need to recall is the following, which is a universal monodromic analogue of \cite{AB}. 

\begin{theorem}

\begin{enumerate} \item The composite monoidal functor $$\QCoh_\G(\G) \rightarrow Z(\H) \rightarrow \H$$ factors through a monoidal functor
$F: \QCoh_\G(\wG) \rightarrow \H$. Moreover, the obtained monoidal functor
$$\Rep(\sfT) \rightarrow \Rep(\sfB) \rightarrow \QCoh_{\sfB}(\sfB) \simeq \QCoh_{\G}(\wG) \rightarrow \H$$
agrees with the Wakimoto construction. In particular, we have canonical isomorphisms $F(\OO(\lambda)) \simeq W_\lambda,$ for  $\lambda \in \Lambda$.    

\item The composition $\QCoh_\G(\wG) \xrightarrow{F} \H \xrightarrow{\Av_\chi} \wH$ is an equivalence of categories. 
\end{enumerate} \label{t:dt1}
\end{theorem}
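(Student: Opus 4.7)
For part (1), the essential input is the Wakimoto filtration of Arkhipov--Bezrukavnikov in its universal monodromic form: Gaitsgory's central sheaf $Z_V$ attached to $V\in\Rep(\G)$ admits a canonical filtration indexed by $\sfT$-weights, with associated gradeds $\bigoplus_\lambda W_\lambda\otimes V(\lambda)$. This filtration is precisely the data of a $\Rep(\sfB)$-equivariant structure extending the $\Rep(\G)$-equivariance of the central functor, and under the identification $\QCoh_\G(\wG)\simeq\QCoh_\sfB(\sfB)$ it yields the sought factorization on the subcategory $\Rep(\sfB)$. One then extends to all of $\QCoh_\G(\wG)$ using the $R$-linear structure coming from the $\sfT$-action, which is present and compatible on both sides. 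The identification $F(\OO(\lambda))\simeq W_\lambda$ is then immediate from the explicit gradeds of the Wakimoto filtration, together with the associativity isomorphisms $\alpha_{\lambda,\mu}$ recalled in Section~\ref{s:aut2}.

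For (2), one shows the composite $\Phi := \Av_\chi\circ F$ is an equivalence in three steps. First, $\Phi$ is $t$-exact: each $W_\lambda$ is perverse by Lemma~\ref{l:pushexact}, the line bundles $\OO(\lambda)$ together with the $R$-action generate $\QCoh_\G(\wG)^{\heartsuit}$, and $\Av_\chi$ is $t$-exact by construction of the perverse $t$-structure on $\wH$. Second, $\Phi$ is fully faithful on the compact generators $\{\OO(\lambda)\}_{\lambda\in\Lambda}$: one verifies the matching
$$\Hom_{\QCoh_\G(\wG)}(\OO(\lambda),\OO(\mu))\simeq\Hom_{\wH}(\Av_\chi W_\lambda,\Av_\chi W_\mu),$$
reducing the spectral side to the global sections of line bundles on the adjoint quotient $\sfB/\sfB$, and the automorphic side, via the adjunction $(\Av_\chi^L,\Av_\chi)$ and the cleanness of Whittaker averages of Wakimoto sheaves, to an explicit $R$-module. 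Third, one establishes essential surjectivity: the collection $\{\Av_\chi W_\lambda\}$ generates $\wH$, which follows from the fact that standard and costandard objects generate $\H$ and that under $\Av_\chi$ these may be expressed in terms of Wakimotos convolved with finite-type Whittaker generators coming from the finite universal Whittaker equivalence $\Sn(N^-,\chi\bs G/N)\simeq R\mod$.

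The main obstacle is the endomorphism matching in the second step, particularly the universal monodromic bookkeeping. One must verify that the $R$-linear structure coming from the universal monodromy around the Iwahori agrees, under $\Phi$, with the $R$-linear structure coming from the $\sfT$-action on $\wG$, and control the vanishing of higher Ext groups between Whittaker averages of Wakimoto sheaves. In effect this step quantifies how the Iwahori--Whittaker model realizes the spectral side, and is the heart of the argument, categorifying the Kazhdan--Lusztig statement that both sides realize the same module for the affine Hecke algebra.
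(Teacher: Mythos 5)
First, a point of comparison: the paper does not prove Theorem \ref{t:dt1} at all --- it is recalled from \cite{DT}, where it is the main result --- so there is no in-paper argument to measure your proposal against. Judged on its own terms, your outline correctly identifies the Arkhipov--Bezrukavnikov strategy that \cite{DT} universalizes: Wakimoto filtrations on central sheaves for part (1), and a Hom computation between Whittaker averages of Wakimoto sheaves for part (2). But as written it is a plan rather than a proof. For part (1), the existence of a filtration of $Z_V$ with associated graded $\bigoplus_\lambda W_\lambda\otimes V(\lambda)$ is not yet ``the data of'' a monoidal factorization through $\QCoh_\sfB(\sfB)$: one must show the filtrations are functorial, multiplicative under the isomorphisms $Z_V\star Z_{V'}\simeq Z_{V\otimes V'}$, and compatible with the monodromy automorphism (which is what supplies the $\cO(\sfB)$-module structure beyond the $\Rep(\sfB)$-structure), together with the higher coherences needed to produce a monoidal functor of $\infty$-categories rather than of homotopy categories. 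For part (2), the computation $\Hom_{\wH}(\Av_\chi W_\lambda,\Av_\chi W_\mu)\simeq\Hom(\OO(\lambda),\OO(\mu))$, equivalently $\Hom_{\H}(\Xi\star W_\lambda,\, W_\mu)$, which you correctly flag as the heart of the matter, is precisely the content of \cite{DT} and is left as a black box here.

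Two of the reductions you do spell out need repair. The perversity of $W_\lambda$ for general $\lambda\in\Lambda$ is not given by Lemma \ref{l:pushexact}, which only yields perversity of $\Delta_\lambda$ and $\nabla_\lambda$; one must write $W_\lambda\simeq\nabla_{\lambda_1}\star\Delta_{\lambda-\lambda_1}\simeq\Delta_{\lambda-\lambda_1}\star\nabla_{\lambda_1}$ with $\lambda_1$ sufficiently dominant and invoke the one-sided exactness of convolution with standard and costandard objects. More seriously, your essential surjectivity argument does not close: $\wH$ is generated by the objects $\Av_\chi\Delta_x\simeq(\Av_\chi W_\lambda)\star\nabla_y$ for $x=\lambda y\in{}^fW$, and the finite factor $\nabla_y$ sits on the \emph{right}, where it cannot be absorbed by the finite Whittaker equivalence $\Sn(N^-,\chi\bs G/N)\simeq R\mod$, which trivializes the finite Hecke action on the left. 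To see that the image of $\Av_\chi\circ F$ generates, one should instead use that it contains $\Av_\chi(Z_V\star W_\lambda)$ for all $V\in\Rep(\G)$ and $\lambda\in\Lambda$, and that the support properties of central sheaves let these reach every stratum; alternatively one can argue via the identification of $\wH$ with $\Xi$-comodules. Either way, an additional idea is required beyond what you wrote.
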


The second basic result we need to recall is the following, which is a universal monodromic analogue of \cite{BezNilp}, \cite{CD}. Note that for any right module $\sM$ for a monoidal category $\sA$, we have a tautological monoidal functor $$Z(\sA)^\r \rightarrow \End_\sA(\sM).$$

\begin{theorem} \label{t:dt2} There is a canonical equivalence of monoidal categories $\QCoh_\G(\G) \simeq \wHw$, given by the composition
$$\QCoh_\G(\G) \simeq \QCoh_\G(\G)^\r \xrightarrow{Z} Z(\H)^\r \rightarrow \wHw.$$ 
\end{theorem}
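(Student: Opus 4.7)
My plan is to establish the equivalence by computing $\wHw = \End_{\H^\r\mod}(\wH)$ explicitly and matching the result with $\QCoh_\G(\G)$. Since the stated functor is a priori monoidal, it suffices to verify it is an equivalence of underlying categories. The first step is to transport the computation using Theorem \ref{t:dt1}(2): under the equivalence $\wH \simeq \QCoh_\G(\wG)$ of right $\H$-modules, the right $\H$-action on $\QCoh_\G(\wG)$ is the one induced by the monoidal functor $F$ of Theorem \ref{t:dt1}(1). In particular, the central sheaves $Z(V)$ for $V \in \QCoh_\G(\G)$ act by tensoring with the pullback $\pi^* V$ along $\pi: \wG \to \G$, while Wakimotos $W_\lambda$ act by tensoring with the line bundles $\OO(\lambda)$.

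The next step is to compute $\End_{\H^\r\mod}(\QCoh_\G(\wG))$ by matching generators of $\H$. Since $\H$ is generated, as a monoidal category, by the central subcategory $\QCoh_\G(\G)$ together with the standards $\Delta_w$, and since $W \simeq W_f \ltimes \Lambda$, one needs only impose commutation with the central action, with the Wakimotos, and with the finite Weyl group standards $\Delta_w, w \in W_f$. Imposing central ($\QCoh_\G(\G)$)-linearity first yields, by the perfect-stack formalism of \cite{BFN10} applied to the map $\wG/\G \to \G/\G$, the Steinberg-type category $\S = \QCoh_\G(\wG \times_\G \wG)$. Imposing further commutation with the Wakimoto action restricts to kernels equivariant for the natural torus action on $\wG \times_\G \wG$ arising from $\wG \simeq \sfG \times_\sfB \sfB$, and finally imposing commutation with the finite Weyl group standards implements $W_f$-descent. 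The final geometric input is the categorical Chevalley-restriction-type statement that the combined torus-and-Weyl-group descent of $\wG \times_\G \wG$ over $\G$ recovers $\QCoh_\G(\G)$, reflecting classically that $\wG \to \G$ is a generic $W_f$-cover with torus fibers encoding the choice of Borel. Once this identification is in hand, one verifies that under it the induced functor $\QCoh_\G(\G) \to \wHw$ agrees with the composition $\QCoh_\G(\G) \xrightarrow{Z} Z(\H)^\r \to \wHw$, which follows from tracing unit objects through the construction.

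The main obstacle is making this sequence of invariants/descent arguments precise at the derived level. Na\"ive invariants differ from derived ones for non-free actions, so the key geometric identification must be checked with the correct homotopical structure, and one must confirm that the generator matching indeed reduces to this identification after the correct shifts and twists. A natural strategy, in the spirit of \cite{BezNilp} and \cite{CD} for the unipotent case, is to first establish the equivalence over the regular semisimple locus $\G^{rs} \subset \G$, where $\wG \to \G$ restricts to an honest \'etale $W_f$-torsor and everything becomes transparent, and then to extend to the full stack via $t$-boundedness estimates together with density of a common compact generating set induced from the regular locus. Handling the derived structure at non-regular loci, where $\G/\G$ fails to be smooth and where the finite Weyl group action has stabilizers, is the most delicate piece of the argument.
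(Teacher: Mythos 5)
This statement is not proven in the present paper: Theorem \ref{t:dt2} appears under ``Recollections from \cite{DT}'' and is imported as a black box from that companion article, where it is established as the universal monodromic analogue of \cite{BezNilp}, \cite{CD}. So there is no proof here to compare yours against, and your proposal must be judged as a free-standing attempt to reprove a cited input.

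On its own terms, the proposal is a reasonable heuristic outline, but the gaps sit exactly where the content of the theorem lives. First, the step ``$\H$ is monoidally generated by the central subcategory together with the standards, so one need only impose commutation with these generators'' is not an argument at the $\infty$-categorical level: an object of $\wHw = \End_{\H^\r\mod}(\wH)$ is an endofunctor equipped with a homotopy-coherent system of commutation data against the entire monoidal category, and such data cannot in general be reconstructed from object-by-object commutation with a generating set. This is the same ``derived versus na\"ive invariants'' problem you flag in your final paragraph; flagging it does not resolve it. Second, identifying the Wakimoto-commutation condition with ``kernels equivariant for the natural torus action'' is not correct as stated: commuting with $-\otimes\OO(\lambda)$ acting through either factor is the condition that the kernel be invariant under twisting by $\OO(\lambda,-\lambda)$, which, coherently in $\lambda$, is a support/descent condition localizing the kernel to the locus of $\wG\times_\G\wG$ where the two projections to $\sfT$ agree --- not equivariance for a $\sfT$-action. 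Third, the concluding ``Chevalley restriction'' assertion --- that this locus, after $W_f$-descent, recovers $\QCoh_\G(\G)$ --- \emph{is} the theorem, with all of its difficulty concentrated on the non-regular-semisimple locus where $\wG\to\G$ is not an \'etale $W_f$-torsor; your proposal defers this to a ``delicate piece'' rather than supplying it. The suggested reduction to $\G^{\on{rs}}$ followed by $t$-boundedness and density is a plausible template (and in the spirit of the cited antecedents), but as written this is a plan for a proof, not a proof.
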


\section{Proof of Theorem \ref{t:mainthm}}
\label{s:proofthm}

Having gathered all the necessary ingredients in the previous section, in this section we give the proof of Theorem \ref{t:mainthm}.

\subsection{Construction of the functor}

\sss We will begin by producing a monoidal functor from the automorphic side to the spectral side via the strategy sketched in Section \ref{ss:idea}, and record some of its basic properties which will be useful for later analysis. 

\begin{proposition} There exists a canonical monoidal functor
$$\iota^!: \H \rightarrow \QCoh_\G(\Ss)$$
Moreover, the composition 
$$\QCoh_\G(\wG) \xrightarrow{F} \H \xrightarrow{\iota^!} \QCoh_\G(\Ss).$$
 is canonically identified as monoidal functors with the the tautological map $\Delta_*: \QCoh_\G(\wG) \rightarrow \QCoh_\G(\Ss).$ 
\end{proposition}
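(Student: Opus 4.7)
The plan is to realize $\iota^!$ through the commuting bimodule structure on the universal affine Iwahori--Whittaker model $\wH$, following the strategy outlined in Section \ref{ss:idea}. By construction $\wH$ carries commuting left and right actions of $\wHw$ and $\H$, respectively. Theorem \ref{t:dt2} identifies the left-acting monoidal category $\wHw$ with $\QCoh_\G(\G)$, and Theorem \ref{t:dt1}(2) identifies $\wH$ with $\QCoh_\G(\wG)$ as a plain category via $\Av_\chi \circ F$. The first step will be to verify that under these two identifications, the left $\wHw$-action on $\wH$ corresponds to the natural $\QCoh_\G(\G)$-action on $\QCoh_\G(\wG)$ by pullback along $\wG \to \G$. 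Granted this, the commuting right $\H$-action furnishes a monoidal functor $\H \to \End_{\QCoh_\G(\G)}(\QCoh_\G(\wG))^\r$. Invoking the Ben-Zvi--Francis--Nadler identification $\End_{\QCoh_\G(\G)}(\QCoh_\G(\wG)) \simeq \S$ recalled in the spectral preliminaries, together with the swap monoidal involution $\sigma_*$ from Section \ref{s:swap} to convert $\S^\r$ back to $\S$, yields the desired monoidal functor $\iota^! \colon \H \to \S$.

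To identify the composition $\iota^! \circ F$ with $\Delta_*$, the key observation is that $F$ factors through the center $Z(\H)$. Hence for any $\sE \in \QCoh_\G(\wG)$ the right $\H$-action of $F(\sE)$ on $\wH$ agrees with its left action. Under the equivalence $\wH \simeq \QCoh_\G(\wG)$ via $\Av_\chi \circ F$, this left action is tautologically tensoring with $\sE$ on $\QCoh_\G(\wG)$, by the monoidality of $F$ established in Theorem \ref{t:dt1}(1). Finally, the $\QCoh_\G(\G)$-linear endomorphism of $\QCoh_\G(\wG)$ given by tensoring with $\sE$ corresponds under $\End_{\QCoh_\G(\G)}(\QCoh_\G(\wG)) \simeq \S$ to $\Delta_*(\sE)$, as explicitly recalled in the spectral preliminaries, and this matching is naturally functorial and monoidal in $\sE$.

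The main obstacle will be the compatibility check of the first paragraph: showing that when the two equivalences of Theorems \ref{t:dt1}(2) and \ref{t:dt2} are assembled together, the bimodule structure on $\wH$ transports to the standard $\QCoh_\G(\G)$-module structure on $\QCoh_\G(\wG)$. Conceptually this is natural, since both the left $\wHw$-action on $\wH$ and the identification $F \colon \QCoh_\G(\wG) \xrightarrow{\sim} \wH$ ultimately originate from the central nearby-cycles functor $Z$, but producing the comparison rigorously will require carefully tracing through the constructions in \cite{DT}.
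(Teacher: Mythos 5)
Your construction of $\iota^!$ is exactly the paper's: form the commuting actions $\wHw \circlearrowright \wH \circlearrowleft \H$, pass to $\End_{\wHw}(\wH)^\r$, rewrite the target via Theorems \ref{t:dt1} and \ref{t:dt2} as $\S^\r$, and correct by the swap $\sigma_*$. The compatibility you flag as the ``main obstacle'' is indeed what the paper's citation of Theorem \ref{t:dt2} is doing for you: that theorem is stated precisely as the assertion that the equivalence $\QCoh_\G(\G)\simeq\wHw$ is the tautological map $Z(\H)^\r\to\End_{\H^\r}(\wH)=\wHw$ precomposed with $Z$, so the transport of the bimodule structure is built into its formulation rather than requiring a separate check.

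One correction to your second paragraph: it is not true that $F$ factors through $Z(\H)$. Theorem \ref{t:dt1}(1) says the composite $\QCoh_\G(\G)\to Z(\H)\to\H$ factors through $F\colon\QCoh_\G(\wG)\to\H$; the objects $F(\sE)$ for general $\sE\in\QCoh_\G(\wG)$ (e.g.\ the Wakimoto sheaves $W_\lambda$) are not central, so the step ``the right action of $F(\sE)$ on $\wH$ agrees with its left action'' is unjustified and, as stated, false. Fortunately you do not need it: for ${}_\chi h = \Av_\chi(F(\mathscr{F}))$ one has ${}_\chi h\star F(\sE)\simeq \Av_\chi(F(\mathscr{F})\star F(\sE))\simeq\Av_\chi(F(\mathscr{F}\otimes\sE))$ directly by associativity of the right action and the monoidality of $F$, which already identifies the right action of $F(\sE)$ with tensoring by $\sE$ on $\QCoh_\G(\wG)$, and hence with $\Delta_*(\sE)$ under the Ben-Zvi--Francis--Nadler identification, exactly as in your final sentence. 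With that repair your argument matches the paper's ``immediate from the construction.''
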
 

\begin{proof} Consider the tautologically commuting actions 
$$\wHw \circlearrowright \wH \circlearrowleft \H,$$
and in particular the arising monoidal functor
\begin{equation} \H \rightarrow \on{End}_{\wHw}(\wH)^\r. \label{e:iota1}\end{equation}
By Theorems \ref{t:dt1} and \ref{t:dt2}, we may rewrite the target of the functor in spectral terms as 
$$\End_{\wHw}(\wH)^\r \simeq \End_{\QCoh_\G(\G)}( \QCoh_\G(\wG))^\r \simeq \QCoh_\G(\Ss)^\r.$$
Postcomposing this with the tautological swap map $\sigma: \QCoh_\G(\Ss)^\r \simeq \QCoh_\G(\Ss)$, cf. Section \ref{s:swap}, yields the first claim of the proposition. The second is immediate from the construction.  
\end{proof}

\subsection{Colocalization}

\sss It remains to show that $\iota^!$ is an equivalence, up to renormalization issues. To this end, we now show the following. 

\begin{proposition} The functor $\iota^!$ admits a fully faithful left adjoint $\iota_!$ which is a morphism of $\H$-bimodules. Moreover $\iota_!$ sends the structure sheaf of $(\Ss)/\G$ to the big tilting object $\Xi$, i.e., $$\iota_!(\OO_{(\Ss)/\G}) \simeq \Xi.$$
\end{proposition}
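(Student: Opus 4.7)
The strategy is to leverage the Whittaker averaging adjunction $\Av_\chi^L \dashv \Av_\chi$ together with the Morita-type identifications supplied by Theorems \ref{t:dt1} and \ref{t:dt2}. Under these, we have $\QCoh_\G(\Ss) \simeq \End_{\wHw}(\wH)$ as $\H$-bimodules, and $\iota^!$ (modulo the swap $\sigma$ of Section \ref{s:swap}) is the action map $\xi \mapsto R_\xi := (-)\star\xi$, realizing $\xi$ as a $\wHw$-linear endofunctor of $\wH$.

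I would first establish the existence of $\iota_!$ by verifying that $\iota^!$ preserves limits, whence the adjoint functor theorem supplies a left adjoint. Continuity of $\iota^!$ should follow from the fact that $\H$ is compactly generated by invertible objects (the standards $\Delta_w$ and costandards $\nabla_w$), so that convolution by any object of $\H$ admits both a left and a right adjoint in each variable. The $\H$-bilinearity of $\iota_!$ will then be automatic from the monoidality of $\iota^!$, which makes $\iota^!$ itself $\H$-bilinear with respect to the induced $\H$-bimodule structure on $\QCoh_\G(\Ss)$.

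The identification $\iota_!(\OO_{\Ss/\G}) \simeq \Xi$ I would obtain by unwinding the defining adjunction. Starting from $\Hom_\H(\iota_!(\OO_{\Ss/\G}), \xi) \simeq \Hom_{\QCoh_\G(\Ss)}(\OO_{\Ss/\G}, \iota^!(\xi))$, the RHS equals $R\Gamma(\Ss/\G, \iota^!(\xi)) \simeq R\Gamma(\wG/\G, p_{2,*}\iota^!(\xi))$. Under the equivalence $\wH \simeq \QCoh_\G(\wG)$, in which $\Av_\chi(1_\H) \leftrightarrow \OO_{\wG/\G}$ and right convolution by $\xi$ on $\Av_\chi(1_\H)$ computes to $p_{2,*}\iota^!(\xi)$, this identifies with $\Hom_\wH(\Av_\chi(1_\H), \Av_\chi(1_\H)\star\xi) = \Hom_\wH(\Av_\chi(1_\H), \Av_\chi(\xi))$. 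The $\Av$-adjunction rewrites this as $\Hom_\H(\Av_\chi^L\Av_\chi(1_\H), \xi) = \Hom_\H(\Xi, \xi)$, and Yoneda concludes.

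Finally, for full faithfulness of $\iota_!$ -- equivalently the assertion that the unit $\id \xrightarrow{\sim} \iota^!\iota_!$ is an equivalence -- I would use $\H$-bilinearity together with the fact that $\QCoh_\G(\Ss)$ is compactly generated by the objects $\OO(\lambda,\mu) \simeq \iota^!(W_\lambda)\star\OO_{\Ss/\G}\star\iota^!(W_\mu)$ to reduce to the single case $\sE = \OO_{\Ss/\G}$. Combined with the previous step, this amounts to the identification $\iota^!(\Xi) \simeq \OO_{\Ss/\G}$, i.e., that right convolution with $\Xi$ on $\wH$ corresponds under $\wH \simeq \QCoh_\G(\wG)$ to the endofunctor $q^*q_*$ (for $q: \wG \to \G$). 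This last identification is the principal obstacle: it requires matching the Whittaker comodule description $\wH \simeq \Xi\text{-comod}_{\H}$ against the geometric interpretation of $q^*q_*$ as the monad encoding descent from $\wG$ to $\G$, and doing so in a manner sufficiently canonical to give the desired natural isomorphism of $\wHw$-linear endofunctors.
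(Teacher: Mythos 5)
Your argument for the identification $\iota_!(\OO_{(\Ss)/\G}) \simeq \Xi$ is correct and attractive: the chain $\Hom_{\S}(\OO, \iota^!\xi) \simeq \Hom_{\wH}(\Av_\chi(1), \Av_\chi(\xi)) \simeq \Hom_{\H}(\Av_\chi^L\Av_\chi(1), \xi) \simeq \Hom_{\H}(\Xi,\xi)$ is a clean Yoneda computation that the paper does not use (it instead reads the formula off from the spectral description of the insertion map $\Hw \otimes \wH \to \Hw \otimes_{\wHw} \wH$ as $*$-pullback). The reduction of full faithfulness to the single object $\OO_{(\Ss)/\G}$, via $\H$-biequivariance of the unit and compact generation of $\S$ by the line bundles $\OO(\lambda,\mu)$, is also sound. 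Two smaller caveats: existence of the left adjoint requires an actual verification that $\iota^!$ preserves limits (compact generation of $\H$ by invertibles does not by itself give this; one needs something like the dualizability formalism), and strict, rather than merely oplax, $\H$-bilinearity of $\iota_!$ uses the rigidity of $\H$, not just the monoidality of $\iota^!$.

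The genuine gap is the one you flag yourself: full faithfulness now hinges on the assertion that the unit $\OO_{(\Ss)/\G} \to \iota^!(\Xi)$ is an equivalence, i.e., that right convolution with $\Xi$ on $\wH \simeq \QCoh_\G(\wG)$ is canonically the monad $q^*q_*$ for $q: \wG \to \G$. This is not a formal consequence of Theorems \ref{t:dt1} and \ref{t:dt2} as stated, and computing $\iota^!(\Xi)$ directly amounts to controlling $\Hom_{\H}(W_\lambda \star \Xi \star W_\mu, \Xi)$ for all $\lambda,\mu$ --- a nontrivial computation of the same flavor as the hardest estimates later in the paper. The paper sidesteps this entirely by appealing to \cite[Proposition 3.2]{BZGO}: once the bimodule $\wH$ is properly dualizable, the counit $\Hw \otimes_{\wHw} \wH \to \H$ (which \emph{is} $\iota_!$) is automatically fully faithful. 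By rigidity of $\H$ and $\wHw$, proper dualizability reduces to the counit preserving compact objects, which by biequivariance reduces to the single fact that $\Xi$ is compact in $\H$ --- a far weaker input than the identification of $\iota^!(\Xi)$. So your plan is salvageable in principle, but to close it along your lines you would need to supply the comparison of $-\star\Xi$ with $q^*q_*$; the efficient fix is to replace the last step by the Morita-theoretic criterion.
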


\begin{proof} Consider the tautological functor associated to the bimodule $\wH$, i.e., 
\begin{equation} \label{e:adj?}\wHw\mod \leftarrow \H\mod: \wH \underset{\H} \otimes -.\end{equation}
By a general result in Morita theory, due to Ben-Zvi--Gunningham--Orem \cite[Proposition 3.2]{BZGO}, it suffices to see this bimodule is properly dualizable; i.e., that the functor \eqref{e:adj?} admits a left adjoint, and that the unit and counit of adjunction admit continuous right adjoints.  

If we write $\Hw := \Sn(\I \bs LG / \I, \chi)$, then by the rigidity of $\H$ we have 
$$\wH \underset{\H} \otimes - \simeq \Hom_{\H\mod}(\Hw, -),$$
hence the left adjoint to \eqref{e:adj?} is given by tensoring with the bimodule $\Hw$. The unit of adjunction $$\wHw \simeq \wH \underset{\H} \otimes \Hw$$is an equivalence, so tautologically admits a continuous right adjoint. 

It remains to see the counit of adjunction, i.e., the convolution map 
$$\Hw \underset{\wHw} \otimes \wH \rightarrow \H$$
admits a continuous right adjoint. By the rigidity of $\wHw$, the insertion functor $$\Hw \otimes \wH \rightarrow \Hw \underset{\wHw} \otimes \wH $$admits a continuous right adjoint, and in particular the latter tensor product is compactly generated by insertions of compact objects from $\Hw \otimes \hspace{.5mm}\wH$. Therefore, it is enough to see that the natural map $\Hw \otimes  \hspace{.5mm}\wH \rightarrow \H$ preserves compact objects. However, as this is a map of $\H$-bimodules, it is enough to check that, if we write $\delta_\chi$ for the standard generator of $\Hw$ as an $\H$-module, and similarly for ${}_{\chi}\delta$, that the image of $\delta_\chi \boxtimes \hspace{.5mm}{}_\chi\delta$ is compact. But the latter is the big tilting object $\Xi$ in $\Shv_{\sN}(N \bs G / N) \hookrightarrow \H$, and in particular is compact, as desired. 

That left adjointness equips $\iota_!$ with a datum of strict $\H$-biequivariance follows from the rigidity of $\H$. Finally, note that, in spectral terms, the insertion 
$$\Hw \otimes \wH \rightarrow \Hw \underset{\wHw} \otimes \wH$$
corresponds to $*$-pullback to the fiber product, i.e.,  
$$\QCoh_\G( \wG ) \otimes \QCoh_\G(\wG) \rightarrow \S.$$
In particular, $\delta_\chi \boxtimes \hspace{.5mm} {}_\chi \delta$, which corresponds under Theorem \ref{t:dt1} to $\OO_{\wG/\G} \boxtimes \OO_{\wG/\G}$, is sent to $\OO_{(\Ss)/\G}$, as desired. 
\end{proof}

\subsection{Left completion}

\sss Having obtained the left adjoint $\iota_!$, our next task is to understand the kernel of the map $\iota^!$. To state the answer, we refer to Section \ref{s:tstruc} for our notation and conventions for $t$-structures, and a reminder on the notion of infinitely connective objects. 

In this paper, when we speak of $t$-structures on our categories of interest, we always mean the perverse $t$-structure on the automorphic side, as in Section \ref{s:aut1}, and the usual $t$-structure on quasicoherent or ind-coherent sheaves on the spectral side.

 \begin{theorem} \label{t:leftcomp} An object of $\H$ lies in the kernel of $\iota^!$ if and only if it is infinitely connective, i.e., $$\ker(\iota^!) \simeq \H^{\leqslant -\infty}.$$
 \end{theorem}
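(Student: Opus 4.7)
My plan is to prove both inclusions separately, beginning with the identification $\iota_!(\OO(\lambda, \mu)) \simeq W_\lambda \star \Xi \star W_\mu$ for all $\lambda, \mu \in \Lambda$, which follows from the $\H$-biequivariance of $\iota_!$ together with $\iota_!(\OO_{\Ss/\G}) \simeq \Xi$. A useful reformulation: under the equivalence $\QCoh_\G(\Ss) \simeq \End_{\QCoh_\G(\G)}(\QCoh_\G(\wG))$, the object $\iota^!(\xi)$ is the $\QCoh_\G(\G)$-linear endofunctor of $\wH \simeq \QCoh_\G(\wG)$ given by right convolution with $\xi$, and as such is determined by its values on the line bundles $\OO(\lambda) \simeq \Av_\chi(W_\lambda)$. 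By the right $\H$-equivariance of $\Av_\chi$ these values equal $\Av_\chi(W_\lambda \star \xi)$, so $\iota^!(\xi) = 0$ if and only if $\Av_\chi(W_\lambda \star \xi) = 0$ for all $\lambda \in \Lambda$.

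For the inclusion $\H^{\leqslant -\infty} \subset \ker(\iota^!)$, I would first observe that each $W_\lambda \star \Xi \star W_\mu$ is perverse. Here $\Xi$ is perverse as the big tilting coalgebra in the Whittaker heart, and convolution with a Wakimoto is perversely $t$-exact on both sides: via the factorization $W_\lambda \simeq \nabla^{\on{can}}_{\lambda_+} \star \Delta^{\on{can}}_{\lambda_-}$ for any decomposition $\lambda = \lambda_+ + \lambda_-$ with $\lambda_+ \in \Lambda^+, \lambda_- \in \Lambda^-$, combined with the length additivity $\ell(t^{\lambda_+} y) = \ell(t^{\lambda_+}) + \ell(y)$ for $y \in W_f$ and its analogue for $\lambda_-$, this reduces to the $t$-exactness of $j_{w,!}, j_{w,*}$ from Lemma \ref{l:pushexact}. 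Since the $\OO(\lambda, \mu)$ compactly generate $\S$, $\iota_!$ preserves compactness, and compact objects of $\H$ are bounded in the perverse $t$-structure; so for any compact $c \in \S$ and $\xi \in \H^{\leqslant -\infty}$, $\Hom_\H(\iota_!(c), \xi) = 0$ by $t$-structure degree reasons, and continuity along $c \simeq \colim c_i$ extends this to all $c$, yielding $\iota^!(\xi) = 0$ via adjunction.

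For the inclusion $\ker(\iota^!) \subset \H^{\leqslant -\infty}$, by the reformulation I need $\Av_\chi(W_\lambda \star \xi) = 0$ for all $\lambda$ to force $\xi \in \H^{\leqslant -\infty}$. Since Wakimoto convolution is $t$-exact, $H^n(W_\lambda \star \xi) \simeq W_\lambda \star H^n(\xi)$; using that $\Av_\chi$ has bounded amplitude and that $\wH \simeq \QCoh_\G(\wG)$ is left complete, the problem reduces to showing that for any nonzero perverse $\eta \in \H^\heartsuit$, some $\Av_\chi(W_\lambda \star \eta)$ is nonzero. This joint conservativity on the perverse heart is the principal obstacle I anticipate. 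The cleanest argument is via the Bernstein presentation: every standard $\Delta_w$ for $w = t^\lambda y$ with $\lambda \in \Lambda, y \in W_f$ factorizes as $W_\lambda \star \Delta_y$, and the finite standards $\Delta_y$ have nonzero Whittaker averages by the big-projectivity of $\Xi$ in the Whittaker category. Combined with continuity and a Wakimoto filtration of general perverse sheaves in $\H^\heartsuit$, this should yield joint conservativity modulo $\H^{\leqslant -\infty}$, completing the proof.
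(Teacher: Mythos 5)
Both directions of your argument have problems; the first is an outright error, the second an unproven key claim.

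For the inclusion $\H^{\leqslant -\infty} \subseteq \ker(\iota^!)$, the step ``$\Hom_\H(\iota_!(c),\xi)=0$ by $t$-structure degree reasons,'' for $c$ compact in $\S$ and $\xi$ infinitely connective, is false. The orthogonality supplied by a $t$-structure kills maps \emph{out of} infinitely connective objects into eventually coconnective ones; it does not kill maps from perverse-bounded objects \emph{into} $\H^{\leqslant -\infty}$. Were your claim correct for all compact objects of $\H$ (and $\iota_!(c)$ is compact), compact generation would force $\H^{\leqslant -\infty}=0$ and the theorem would be vacuous; the content of the statement is precisely that $\H$ fails to be left-separated. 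The actual proof of this inclusion shows that $\Av_\chi(\xi')=0$ for $\xi'$ infinitely connective, using that $\Av_\chi$ is $t$-exact and that $\wH\simeq\QCoh_\G(\wG)$ contains no nonzero infinitely connective objects; the latter requires the $t$-boundedness of the equivalence of Theorem \ref{t:dt1}, which is the content of Lemma \ref{l:tbdwhit} and is itself not formal.

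For the inclusion $\ker(\iota^!)\subseteq\H^{\leqslant-\infty}$, your reformulation via $\Av_\chi(W_\lambda\star -)$ and the reduction to joint conservativity of these $t$-exact functors on the perverse heart are sound, but that conservativity \emph{is} the hard direction, and your sketch does not establish it: a general object of $\H^{\heartsuit}$ admits no Wakimoto filtration, and nonvanishing of $\Av_\chi$ on standard objects does not control an arbitrary perverse sheaf, whose Whittaker averages could a priori cancel. The paper replaces this by the uniform estimate of Lemma \ref{l:reflconn} (for compact $\eta$, $\iota^!(\eta)\geqslant 0$ forces $\eta\geqslant -N$), proved by computing $\Hom$ against the objects $\iota_!(\OO(-\mu,-\lambda))\simeq W_{-\mu}\star\Xi\star W_{-\lambda}$, exploiting that $\Delta_w$ is the unique graded piece of $\Xi\star\Delta_w$ of maximal length in its right $W_f$-coset, together with the $t$-boundedness of spectral convolution with $\iota^!(\Delta_y)$ for $y\in W_f$; a stupid-truncation argument on $\iota^!(c)$ then passes from compact to arbitrary objects. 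An argument of comparable substance is needed at the point where you write ``should yield joint conservativity.''
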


\begin{proof} We begin with the `only if' implication. For an arbitrary object $c$ of $\H$, consider the tautological triangle
$$\iota_! \circ \iota^! (c) \rightarrow c \rightarrow \xi \xrightarrow{+1}.$$
We must show that, for every $c$, the object $\xi$ is infinitely connective. As all appearing functors, including the perverse truncation functors on $\H$, commute with filtered colimits, without loss of generality we may take $c$ to be a compact object. 

We first claim that $\iota^!(c)$ is bounded coherent, i.e., an object of $\Coh_\G(\wG \times_\G \wG) \hookrightarrow \S$. Equivalently, we claim that its image in $\QCoh(\wG \times \wG)$ is compact, i.e, perfect. However, this is clear, as the compact objects of $\H$ are generated under extensions by invertible objects with respect to convolution, and the diagonal of $\QCoh(\wG \times \wG)$ is perfect. 

We may therefore write $\iota^!(c)$ as a bounded above complex of vector bundles $\sE \simeq \iota^! (c)$. In particular, if for an integer $n$ we write $\sE^{\geqslant n}$ for the stupid truncation of $\sE$, this is a compact object of $\S$, and we have $\sE \simeq \varinjlim \sE^{\geqslant n}$, and whence 
$$\iota_! \circ \iota^! (c) \simeq \varinjlim \iota_!( \sE^{\geqslant n}).$$
For fixed $n$, consider the triangle 
$$\iota_!(\sE^{\geqslant n}) \rightarrow c \rightarrow \xi_n \xrightarrow{+1}.$$
Note that, as $\iota_!$ preserves compactness, $\xi_n$ is a compact object, which satisfies 
$\iota^! (\xi_n) \simeq \sE / \sE^{\geqslant n},$ and in particular $\iota^!(\xi_n) < n$. For this reason, we show the following. 

\begin{lemma} \label{l:reflconn}  There exists an integer $N$ with the following property. For any compact object $\eta$ of $\H$, if $\iota^!(\eta) \leqslant 0$, then $\eta \leqslant N$, and if $\iota^!(\eta) \geqslant 0$, then $\eta \geqslant -N$. 
\end{lemma}

Note that, assuming the lemma, it follows that $\xi_n < n + N$, whence upon passing to the colimit we obtain $\xi \simeq \varinjlim \xi_n$ is infinitely connective, as desired.

\begin{proof}[Proof of Lemma \ref{l:reflconn}] For ease of reading, we will break the proof into steps. 

{\noindent \em Step 1.} Suppose we can find an integer $N'$ such that $\iota^!(\eta) \geqslant 0$ implies $\eta \geqslant d$ for every $\eta \in \H^c$. By monoidality, $\iota^!$ intertwines the duality functors on $\H^c$ and $\Coh_\G(\Ss)$. We have already seen that these duality functors are $t$-bounded. Hence, the existence of an $N'$ implies the existence of an $N$ satisfying both conditions in the statement of the lemma. 

{\noindent \em Step 2.} We first make the preliminary claim that there exists an integer $d$ such that for all $y \in W_f$, the functors \beq \label{ConvolveFinite} - \star \iota^!(\Delta_y)[-d]  : \S \xrightarrow{\sim} \S\eeq are left exact. The proof of this claim is given in the remainder of this step, and in Step 3 below we will show that one may take $N' = d$.

To see the claim, let us reduce to the analogous assertion for the corresponding endofunctor of $\QCoh(\wG)$, as follows. Consider the closed embedding $\delta: (\wG \times_{\sfG} \wG) /  \sfG \rightarrow (\wG \times \wG) / \sfG$, the tautological projection $\pi: \wG \times \wG \rightarrow (\wG \times \wG) / \sfG$, and the associated sequence of functors
$$\xymatrix{ \QCoh_{\sfG}(\wG \times_\sfG \wG) \ar[r]^{\delta_*} & \QCoh_{\sfG}(\wG \times \wG) \ar[r]^{\pi^*} & \QCoh(\wG \times \wG).}$$
It is clear both functors $\delta_*$ and $\pi^*$ are $t$-exact and conservative, as $\delta_*$ is pushforward along a closed embedding, and $\pi^*$ is pullback along a faithfully flat map. We also note that $\delta_*$ and $\pi^*$ are both naturally monoidal. Indeed,  for $\delta_*$, if we consider the pullback monoidal functor $\QCoh(\pt / \sfG) \rightarrow \QCoh(\sfG / \sfG)$, we note that $\delta_*$ canonically identifies with the composition 
\begin{align*} \QCoh_\sfG(\wG \times_{\sfG} \wG) \simeq \End_{\QCoh(\sfG/\sfG)}(\QCoh(\wG / \sfG)) \xrightarrow{\on{Oblv}} \End_{\QCoh(\pt / \sfG)}(\QCoh(\wG / \sfG)) \simeq \QCoh_{\sfG}(\wG \times \wG),\end{align*}where $\on{Oblv}$ denotes the tautological restriction from $\QCoh(\sfG / \sfG)$-equivariant functors to $\QCoh(\pt / \sfG)$-equivariant functors. Similarly, $\pi^*$ canonically identifies with the composition 
\begin{align*} \QCoh_\sfG(\wG \times \wG) \simeq  \End_{\QCoh(\pt / \sfG)}(\QCoh(\wG / \sfG)) \xrightarrow{ - \underset{\QCoh(\pt / \sfG)} \otimes \QCoh(\pt)} \\ 
 \End(\QCoh(\wG / \G) \underset{\QCoh(\pt / \sfG)} \otimes \QCoh(\pt))  \simeq \End(\QCoh(\wG)) \simeq \QCoh(\wG \times \wG), \end{align*}where $ - \otimes_{{\QCoh(\pt / \sfG)}} \QCoh(\pt)$  explicitly is the functor of de-equivariantization. 
 
 Using these observations, it is therefore enough to produce an integer $d$ such that for all $y \in W_f$ the functors
 $$ - \star (\pi^* \circ \delta_* \circ \iota^!(\Delta_y))[-d]: \QCoh(\wG \times \wG) \rightarrow \QCoh(\wG \times \wG)$$are left exact. However, we claim more generally that for any finite set $\Phi_i, i \in \mathscr{I},$ of endofunctors of $\QCoh(\wG)$ admitting continuous right adjoints, with corresponding integral kernels $\mathscr{K}_i \in \QCoh(\wG \times \wG), i \in \mathscr{I}$, there exists an integer $d$ such that $$ - \star \mathscr{K}_i[-d]: \QCoh(\wG \times \wG) \rightarrow \QCoh(\wG \times \wG)$$are all left exact. Indeed, under the tautological identification of $ - \star \mathscr{K}_i$ with 
 $$\QCoh(\wG \times \wG) \simeq \QCoh(\wG) \otimes \QCoh(\wG) \xrightarrow{\id \otimes \Phi_i} \QCoh(\wG) \otimes \QCoh(\wG) \simeq \QCoh(\wG \times \wG),$$
it follows that $- \star \mathscr{K}_i$ also admits a continuous right adjoint, and in particular preserves compactness. Noting that the structure sheaf of the diagonal $\Delta_*(\OO_{\wG})$ is perfect by the smoothness of $\wG$, and that $- \star \mathscr{K}_i$ sends $\Delta_*(\mathscr{O}_{\wG})$ to $\mathscr{K}_i$, it therefore follows that $\mathscr{K}_i$ is also a perfect complex. Using this perfectness, it follows that each $- \star \mathscr{K}_i$ is $t$-bounded, whence the existence of the desired $d$ follows by the finiteness of $\mathscr{I}$.

{\noindent \em Step 3.} Having shown the existence of an integer $d$ as in the discussion near Equation \eqref{ConvolveFinite}, let us show we may take $N' = d$, where $N'$ is as in the discussion of Step 1. That is, for any compact object $\eta$ of $\H$ satisfying $\iota^!(\eta) \geqslant 0$, we will show that $\eta \geqslant - d$.

Because $\Delta_{-\mu}\star-$ is left exact, for any $\mu \in \Lambda$, and $\Delta_{-\mu} \star \nabla_{\mu} \star \eta \simeq \eta$, it suffices to show for some $\mu \in \Lambda$ that $\nabla_{\mu} \star \eta \geqslant - d$, i.e., that for all $w \in W$ we have 
 \begin{equation}\label{e:boundhound}\Hom(\Delta_w, \nabla_{\mu} \star \eta) \geqslant -d.\end{equation}
We will show the claim for any $\mu$ for which $\nabla_{\mu} \star \eta$ is in the full subcategory generated by shifts of the objects $\nabla_x$, for $x \in \Lambda^{++} \cdot W_f \subset W$; note the set of such $\mu$ contains all sufficiently dominant elements, thanks to the compactness of $\eta$, and is in particular nonempty.  

For such a $\mu$, to verify \eqref{e:boundhound}, we may assume that $w \in \Lambda^{++} \cdot W_f$, as otherwise $\Hom(\nabla_w, \nabla_\mu \star \eta)$ vanishes.  Note that any $x \in \Lambda^{++} \cdot W_f$ is of maximal length in its right coset $W_f \cdot x$. As $\Xi \star \Delta_w$ admits a standard filtration with successive quotients $\Delta_{z\cdot w}$, for $z \in W_f$, of which $\Delta_w$ is tautologically the only graded piece of maximal length in its right $W_f$-coset, it follows that we have \begin{align*} 
\Hom(\Delta_w, \nabla_{\mu} \star \eta) &\simeq \Hom(\Xi \star \Delta_w, \nabla_{\mu} \star \eta). \nonumber \intertext{To make contact with the spectral side, let us rewrite $\Xi \star \Delta_w$ using Wakimoto sheaves, as follows. Writing $w_\circ$ for the longest element of $W_f$, note this is equivalently given by } 
&\simeq \Hom(\Xi \star \Delta_{w_{\circ} w}, \nabla_{\mu} \star \eta). \nonumber\\ 
\intertext{Since $w \in \Lambda^{++} \cdot W_f$, we have $w_\circ w \in \Lambda^{--} \cdot W_f$; write $w_\circ w = (-\lambda) \cdot y$ for the corresponding unique factorization, with $-\lambda \in \Lambda^{--}$ and $y \in W_f$. Noting that $-\lambda$ is of maximal length in $-\lambda \cdot W_f$, we have}
&\simeq \Hom(\Xi \star \Delta_{-\lambda} \star \nabla_{y}, \nabla_{\mu} \star \eta) \nonumber\intertext{which after rearranging terms yields the expression}
&\simeq \Hom(\Delta_{-\mu} \star \Xi \star \Delta_{-\lambda}, \eta \star \Delta_{y^{-1}}). \nonumber\\
\intertext{To proceed, note that $\Delta_{-\mu}$ and $\Delta_{-\lambda}$ are Wakimoto sheaves, as $-\mu, -\lambda \in \Lambda^-$, i.e., we have  } 
& \simeq \Hom(W_{-\mu} \star \Xi \star W_{-\lambda}, \eta \star \Delta_{y^{-1}}). \nonumber \\
\intertext{Recalling that $\iota_!$ is an equivariant functor between $\H$-bimodules, which satisfies $\iota_!(\mathscr{O}) \simeq \Xi$, we obtain }
&\simeq \Hom(\iota_!(\mathscr{O}(-\mu, -\lambda)), \eta \star \Delta_{y^{-1}}) \nonumber\\
&\simeq \Hom(\mathscr{O}(-\mu, -\lambda)), \iota^!(\eta) \star \iota^!(\Delta_{y^{-1}})). \nonumber
\end{align*} 
By the assumption that $\iota^!(\eta) \geqslant 0$, and the definition of $d$,  it follows that $\iota^!(\eta) \star \iota^!(\Delta_{y^{-1}}) \geqslant -d$. Therefore, we obtain the inequality \eqref{e:boundhound}, as desired.
\end{proof}

It remains to prove the `if' direction, i.e., that every infinitely connective object lies in the kernel of $\iota^!$. So, fix an infinitely connective object $\xi \in \H^{\leqslant - \infty}$. We need to show that $\xi$ acts by zero on $\wH$, i.e., that 
${}_\chi h \star \xi \simeq 0$ for every object ${}_\chi h$ of $\wH$. It is enough to check this for ${}_\chi h$ running through a set of compact generators of $\wH$, and hence for objects of the form ${}_\chi \delta \star c$, for $c \in \H^c$. Noting that $({}_\chi \delta \star c) \star \xi \simeq {}_\chi \delta \star (c \star \xi)$, and that $\xi' := c \star \xi$ is again infinitely connective, it therefore remains to show that ${}_\chi \delta \star \xi' \simeq \Av_\chi(\xi')$ vanishes. We claim it is enough to show the following. 

\begin{lemma}\label{l:tbdwhit} The equivalence $\wH \simeq \QCoh_\G(\wG)$ of Theorem \ref{t:dt1} is $t$-bounded. 
\end{lemma}

Indeed, as $\Av_\chi$ is exact, as can be seen by considering $\wH$ as left comodules for the tilting object $\Xi$, it follows that $\Av_\chi(\xi')$ is again infinitely connective. But by the lemma, $\wH$ contains no nonzero infinitely connective objects, as this is clear on the spectral side. 

\begin{proof}[Proof of Lemma \ref{l:tbdwhit}] Let us write $\mathsf{F}: \wH \simeq \QCoh_\G(\wG)$ for the equivalence of Theorem \ref{t:dt1}. 

Let us first show that $\mathsf{F}$ is left exact. Suppose $\xi \in \wH$ and $\xi \geqslant 0$. Then in particular, recalling that for $V \in \Rep(\G)^\heartsuit$ and $\lambda \in \Lambda$ the object ${}_\chi \delta \star Z_V \star W_\lambda$ is perverse, it follows that 
$$\Hom_{\QCoh(\widetilde{\sfG}/\sfG)}(V \otimes \cO(\lambda), \mathsf{F}(\xi)) \simeq \Hom_{\wH}( {}_\chi \delta \star Z_V \star W_\lambda, \xi) \geqslant 0,$$
whence $\mathsf{F}(\xi) \geqslant 0$ as well.     

It remains to show that $\mathsf{F}^{-1}$ is also left exact up to a shift. To do so, let us write ${}^fW \subset W$ for the set of elements $x \in W$ which are minimal in $W_f \cdot x$. Note first that the objects 
$$\Av_\chi \Delta_x, \quad \quad x \in {}^fW,$$
are exactly the standard objects of $\wH$, and in particular the perverse $t$-structure on $\wH$ may be characterized by 
$$\zeta \geqslant 0 \quad \quad \text{if and only if} \quad \quad \Hom_{\wH}( \Av_\chi \Delta_x, \zeta) \geqslant 0, \quad \text{for all } x \in {}^fW.$$
Note next that, if we write $x \in W$ as $x = \lambda \cdot y$, we have that $x \in {}^{f}W$ if and only if (i) $\lambda \in \Lambda^-$ and (ii) for each (finite) positive root $\alpha$ such that $\langle \lambda, \alpha \rangle = 0$, we have that $y(\alpha) > 0$. Using this, it is straightforward to see that $\ell(x) = \ell(\lambda) - \ell(y)$, and hence we may write 
\begin{equation} \label{e:fact} \Delta_x \simeq \Delta_\lambda \star \nabla_{y} = W_\lambda \star \nabla_y, \end{equation}
 and in particular $\Av_\chi \Delta_x \simeq (\Av_\chi \Delta_\lambda) \star \nabla_y$. 

As in the proof of Step 2 of Lemma \ref{l:reflconn}, we may choose an integer $\mathfrak{d}$ such that the autoequivalences
$$- \star \iota^!(\nabla_z)[\mathfrak{d}]: \QCoh_\G(\wG) \simeq \QCoh_\G(\wG) $$
are all right exact. With this, we claim that if an object $\sE$ of $\QCoh_\G(\wG)$ satisfies $\sE \geqslant \mathfrak{d}$, then $\mathsf{F}^{-1}(\sE) \geqslant 0$. Indeed, for any $x, \lambda,$ and $y$ as in Equation \eqref{e:fact} we have 
\begin{align*} \Hom_{\wH}( \Av_\chi \Delta_x, \mathsf{F}^{-1} \sE) & \simeq \Hom_{\wH}( (\Av_\chi W_\lambda) \star \nabla_y, \mathsf{F}^{-1} \sE) \\ & \simeq \Hom_{\QCoh_\G(\wG)}( \OO(\lambda) \star \iota^!(\nabla_y), \sE),  
\end{align*}
which is indeed coconnective as $\OO(\lambda) \star \iota^!(\nabla_y) \leqslant \mathfrak{d}$ and $\sE \geqslant \mathfrak{d}$. \end{proof}

\end{proof}

\begin{remark} The semi-infinite Hecke category $\Sn(N_F\mathring{T}_O \bs G_F / N_F\mathring{T}_O)$, with its natural $t$-structure, is from certain perspectives a more natural choice of category on the automorphic side, though it is derived equivalent to $\H$ by a standard argument of Borel--Casselman--Bushnell--Kutzko--Raskin. Nonetheless, its presence may be felt in the proof of Lemma \ref{l:reflconn} and Lemma \ref{l:tbdwhit}, in a manner directly inspired by its similar presence in the striking proof of Lemma 38 in \cite{B}. 
\end{remark}

\sss To state the following corollary, note that the Verdier quotient $$\mathfrak{q}: \H \rightarrow \H / \H^{\leqslant -\infty}$$inherits a monoidal structure, as $\H^{\leqslant - \infty}$ is a two sided ideal in $\H$, by the $t$-boundedness of convolution with compact objects in $\H$. Additionally, the Verdier quotient inherits a unique $t$-structure with the property that $\mathfrak{q}$ is $t$-exact, and this induces an equivalence on eventually coconnective parts\begin{equation} \label{e:coconn}\mathfrak{q}: \H^+ \simeq (\H / \H^{\leqslant - \infty})^+.\end{equation}

\begin{corollary} \label{c:verdq}The monoidal functor $\iota^!$ factors uniquely through a $t$-bounded monoidal equivalence
$$\H/\H^{\leqslant -\infty} \simeq \S.$$    
\end{corollary}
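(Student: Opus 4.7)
The plan is to verify three statements about $\iota^!$: the unique monoidal factorization through $\mathfrak{q}$, that the resulting functor $\bar\iota^!$ is an equivalence, and its $t$-boundedness. For the factorization, Theorem \ref{t:leftcomp} gives $\iota^!(\H^{\leqslant -\infty}) = 0$, and since $\mathfrak{q}$ has already been equipped with a continuous monoidal structure in the paragraph preceding the corollary, the universal property of the Verdier quotient produces a unique monoidal continuous factorization $\iota^! = \bar\iota^! \circ \mathfrak{q}$. For the equivalence, I would use the adjunction $\iota_! \dashv \iota^!$: since $\iota_!$ is fully faithful, for every $Y \in \H$ the cofiber of the counit $\iota_! \iota^! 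Y \to Y$ lies in $\ker(\iota^!) = \H^{\leqslant -\infty}$ (apply $\iota^!$ and use $\iota^! \iota_! \simeq \mathrm{id}_\S$), hence becomes an isomorphism under $\mathfrak{q}$, exhibiting $\mathfrak{q} \circ \iota_!$ as a two-sided inverse to $\bar\iota^!$.

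The $t$-boundedness of the inverse $(\bar\iota^!)^{-1} = \mathfrak{q} \circ \iota_!$ essentially follows from Lemma \ref{l:reflconn}. Applied to $\eta = \iota_!(\sE)$ for compact $\sE \in \S^c$ (using that $\iota_!$ preserves compacts and $\iota^! \iota_!(\sE) \simeq \sE$), the lemma shows that $\iota_!(\sE)$ lies in the same $t$-range as $\sE$ up to a uniform shift by at most $N$. This extends to all of $\S$ by the continuity of $\iota_!$ together with the fact that $\S^{\leqslant 0}$ is generated under colimits by compact connective objects such as $\OO(\lambda, \mu)[k]$ with $k \geqslant 0$, yielding amplitude at most $[-N, N]$ for the inverse equivalence.

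The $t$-boundedness of $\bar\iota^!$ itself is the main obstacle. The plan here is to exploit Lemma \ref{l:tbdwhit} together with the identification of $\iota^!(\eta) \in \S \simeq \End_{\QCoh_\G(\G)}(\QCoh_\G(\wG))$ as the kernel of the endofunctor of right convolution by $\eta$ on $\wH \simeq \QCoh_\G(\wG)$. Its value on the unit $\OO_{\wG/\G}$ (corresponding to ${}_\chi\delta \in \wH$) is $\mathsf{F}(\Av_\chi \eta)$, which is bounded whenever $\eta$ is, since $\Av_\chi$ is $t$-exact and $\mathsf{F}$ is $t$-bounded. Relating the amplitude of this endofunctor back to that of the kernel $\iota^!(\eta) \in \S$, using $\QCoh_\G(\G)$-linearity and the $t$-bounded proper pushforward along $\wG \to \G$, then yields the required bound. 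Alternatively, the monoidality of $\iota^!$ intertwines the Verdier duality $\Dv$ (of amplitude $[0, \dim T]$, Lemma \ref{l:verdamp}) on $\H^c$ with Serre duality $\Ds$ (of amplitude $[0, \dim \G]$) on $\Sc$, allowing one $t$-direction of boundedness to be deduced from the other as in Step 1 of the proof of Lemma \ref{l:reflconn}, and reducing the problem to a single-sided estimate.
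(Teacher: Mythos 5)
Your proposal is correct and matches the paper's (much terser) proof: the paper likewise deduces the equivalence directly from Theorem \ref{t:leftcomp} together with the fully faithful left adjoint $\iota_!$, and disposes of the $t$-boundedness by reducing to compact objects, where it is exactly Lemma \ref{l:reflconn}. The extra work you devote to the forward direction is essentially redundant --- for an equivalence, $t$-boundedness of the inverse yields that of the functor by a standard truncation/retract argument --- and your duality alternative is precisely Step 1 of the paper's own proof of Lemma \ref{l:reflconn}.
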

\begin{proof} The assertion at the level of monoidal categories is immediate from the statement of Theorem \ref{t:leftcomp}. For the assertion regarding $t$-boundedness, it is enough to see this on compact objects of $\S$, but this is a special case of Lemma \ref{l:reflconn}.    
\end{proof}

\subsection{Anticompletion}

\sss To prove Theorem \ref{t:mainthm}, it remains to show the following. 

\begin{proposition} \label{p:small} The monoidal functor $\iota^!$ restricts to an equivalence of small monoidal categories
\begin{equation} \label{e:redsummersun}\H^c \simeq \Coh_{\G}(\Ss).\end{equation}
\end{proposition}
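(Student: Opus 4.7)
The plan is to first show that $\iota^!|_{\H^c}$ is fully faithful with image in $\Coh_\G(\Ss)$, and then to establish essential surjectivity onto $\Coh_\G(\Ss)$.

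For the inclusion $\iota^!(\H^c) \subseteq \Coh_\G(\Ss)$, we use that $\H$ is rigid monoidal, since its compact generators $\Delta_w$ are invertible under convolution. Hence compact objects coincide with dualizable objects in $\H$, and the monoidal functor $\iota^!$ sends these to dualizable objects in $\S$, which are precisely $\Coh_\G(\Ss)$. For full faithfulness, observe that every compact object of $\H$ is cohomologically bounded with respect to the perverse $t$-structure, being built by shifts, finite colimits, and retracts from the perverse standards $\Delta_w$. In particular $\H^c$ is contained in the full subcategory of bounded below objects, and the standard $t$-structure vanishing shows this latter subcategory admits no nonzero maps from $\H^{\leqslant -\infty}$. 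By Theorem \ref{t:leftcomp}, the right orthogonal of $\H^{\leqslant -\infty}$ coincides with the essential image of the fully faithful left adjoint $\iota_!$, which therefore contains $\H^c$. Hence for $c \in \H^c$ the counit $\iota_!\iota^! c \to c$ is an equivalence, and full faithfulness of $\iota^!|_{\H^c}$ follows from that of $\iota_!$.

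For essential surjectivity, the essential image is a thick monoidal subcategory of $\Coh_\G(\Ss)$, closed under Serre duality $\Ds$ (which is matched with the duality $\Dv$ on $\H^c$ via the monoidality of $\iota^!$ together with the compatibility $\iota^! \circ \on{inv}_* \simeq \sigma_* \circ \iota^!$). Crucially, the essential image contains all perfect complexes: since $\iota_!$ is continuous and sends $\OO_{\Ss/\G}$ to the compact object $\Xi \in \H^c$, the thick subcategory $\Perf_\G(\Ss) \subseteq \S$ generated by $\OO_{\Ss/\G}$ is carried by $\iota_!$ into $\H^c$, and so every $\sF \in \Perf_\G(\Ss)$ lies in the essential image via $\sF \simeq \iota^!\iota_!\sF$. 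To conclude that the image is all of $\Coh_\G(\Ss)$, we invoke the structural fact that $\Coh_\G(\Ss)$ is the thick closure of $\Perf_\G(\Ss)$ in $\S$, a consequence of the quasi-smoothness of the derived Steinberg stack $\Ss/\G$.

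The main obstacle is this final step. An alternative to invoking quasi-smoothness is to identify additional objects already in the essential image, such as $\Delta_*(\OO(\lambda)) = \iota^!(W_\lambda)$ arising from Wakimoto sheaves and the images $\iota^!(Z_V)$ of central sheaves for $V \in \Rep(\G)^\heartsuit$, and argue directly that together with perfect complexes these exhaust $\Coh_\G(\Ss)$ as a thick subcategory.
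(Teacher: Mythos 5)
Your overall strategy (full faithfulness on compacts, then identification of the essential image) is reasonable, but both halves contain genuine errors. In the first half you confuse the two orthogonals: the essential image of the fully faithful \emph{left} adjoint $\iota_!$ is the \emph{left} orthogonal of $\ker(\iota^!) = \H^{\leqslant -\infty}$, i.e.\ the objects $c$ with $\Hom(c,\xi) = 0$ for all infinitely connective $\xi$, whereas your $t$-structure vanishing places $\H^c$ in the \emph{right} orthogonal (no nonzero maps \emph{from} $\H^{\leqslant -\infty}$). Compact objects do not lie in the left orthogonal: if the counit $\iota_!\iota^!c \to c$ were an equivalence for every compact $c$, then passing to filtered colimits would make it an equivalence on all of $\H$, forcing $\H^{\leqslant -\infty} = 0$, which is exactly what fails (Theorem \ref{t:leftcomp} shows the cofiber of the counit on a compact object is infinitely connective, not zero). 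Your conclusion is salvageable: from the triangle $\iota_!\iota^!c \to c \to \xi$ with $\xi \in \H^{\leqslant -\infty}$ and $d$ compact, hence bounded below, one gets $\Hom(\xi, d) = \Hom(\xi[-1], d) = 0$ and so $\Hom(\iota^!c, \iota^!d) \simeq \Hom(\iota_!\iota^!c, d) \simeq \Hom(c,d)$; but the argument as written is incorrect.

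The second half has a more serious gap. The ``structural fact'' you invoke --- that $\Coh_\G(\Ss)$ is the thick closure of $\Perf_\G(\Ss)$ --- is false: $\Perf_\G(\Ss)$ is already a thick (idempotent-complete stable) subcategory, so it equals its own thick closure, while $\Coh_\G(\Ss) \supsetneq \Perf_\G(\Ss)$ precisely because the Steinberg stack is singular. Quasi-smoothness is not a remedy; nonzero virtual codimension is exactly the situation in which $\Coh$ properly enlarges $\Perf$, which is why $\IndCoh \neq \QCoh$ here and why the theorem has content. Your fallback (adjoining $\Delta_*(\OO(\lambda))$ and the $\iota^!(Z_V)$ and ``arguing directly'') is not carried out and is the entire difficulty. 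The paper avoids generating $\Coh$ under thick closure altogether: it characterizes both sides intrinsically via pseudocompact objects, i.e.\ eventually coconnective objects corepresenting functors that commute with uniformly bounded-below filtered colimits. The $t$-bounded monoidal equivalence $\H/\H^{\leqslant -\infty} \simeq \S$ of Corollary \ref{c:verdq} matches pseudocompact objects on the two sides; $\on{Psc}(\S) = \Coh_\G(\Ss)$ is standard, and $\on{Psc}(\H) = \H^c$ is proved by induction on the (finite) support, using that the restriction of a pseudocompact object to a maximal stratum is a compact $R$-module by smoothness of $R$. Some such intrinsic characterization of $\Coh$ as an enlargement of $\Perf$, in the spirit of Bezrukavnikov, is unavoidable at this step.
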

Indeed, Theorem \ref{t:mainthm} then follows by passing to the ind-completions of both sides of \eqref{e:redsummersun}. 

\sss For the proof of Proposition \ref{p:small}, we first recall some convenient terminology from \cite{CD}.

Given a category $\sC$ with a $t$-structure compatible with filtered colimits, we call an object $c$ of $\sC$  {\em pseudocompact} if it is eventually coconnective and $\Hom(c, -)$ commutes with uniformly bounded below filtered colimits. Let us denote the full subcategory of pseudocompact objects by $$\on{Psc}(\sC) \hookrightarrow \sC.$$Note that it only depends on the $t$-structure up to $t$-bounded equivalence. 

\begin{proof}[Proof of Proposition \ref{p:small}] It follows from Corollary \ref{c:verdq} that we have an equivalence 
$$\on{Psc}(\H / \H^{\leqslant - \infty}) \simeq \on{Psc}(\QCoh_\G(\Ss)).$$
On the one hand, it is standard that the pseudocompact objects in $\QCoh_\G(\Ss)$ are exactly $\Coh_\G(\Ss)$. On the other hand, it is straightforward from Equation \eqref{e:coconn} that $\mathfrak{q}$ restricts to an equivalence 
$$\mathfrak{q}: \on{Psc}(\H) \simeq \on{Psc}(\H/\H^{\leqslant - \infty}).$$
It therefore remains to check that the tautological inclusion $\H^c \hookrightarrow \on{Psc}(\H)$ is essentially surjective. So, suppose $\xi$ is a pseudocompact object of $\H$, and in particular eventually coconnective. By writing $$\xi \simeq \varinjlim i_{y,!} \circ i_y^! (\xi),$$ cf. Equation \eqref{e:closed}, we deduce by pseudocompactness that $\xi$ is $!$-extended from a closed, finite union of strata. Pick a stratum $j_y: IyI \hookrightarrow LG$ maximal in the support of $\xi$. It is straightforward that $j_y^!(\xi) \simeq j_y^*(\xi) \in R\mod$ is again pseudocompact, whence compact by the smoothness of $R$. By considering the triangle
$$j_{y,!} \circ j_y^! (\xi) \rightarrow \xi \rightarrow \xi' \xrightarrow{+1},$$
and noting that $\xi'$ is again pseudocompact and is supported on a strictly smaller closed, finite union of strata, we obtain the compactness of $\xi$ by induction on the number of strata in its support. 
\end{proof}

\sss Let us conclude with a couple remarks.

\begin{remark} Note that as the map $\wG/\G \rightarrow \G/\G$ is proper and relatively Calabi--Yau of dimension zero, the monoidal structures on $\Coh_\G(\Ss)$ and hence $\IndCoh_\G(\Ss)$ induced by $*$-convolution and $!$-convolution canonically coincide.

In particular, the monoidal categories in Theorem \ref{t:mainthm} admit a natural central functor from the symmetric monoidal category $\QCoh_\G(\G)$, equipped with the $!$-tensor product, and in particular further from the invariant theory quotient $\QCoh(\G /\!\!/ \G)$. Consider within the invariant theory quotient the formal subscheme $$\G/\!\!/\G^{\on{res.var.}} \hookrightarrow \G/\!\!/\G$$obtained as the disjoint union of its formal completions along all its closed points.  We have a tautological adjunction
$$i_*: \on{IndCoh}(\G/\!\!/\G)^{\on{res.var.}} \leftrightarrows \QCoh(\G/\!\!/\G): i^!$$
wherein $i^!$ is a monoidal colocalization. In particular, $i_*$ identifies the former with the full subcategory of the latter generated by all skyscraper sheaves, and our use of $!$-tensor product, rather than $*$-tensor product, endows this with a monoidal unit, namely the dualizing sheaf.

If we apply the corresponding base change $$- \underset{\QCoh(\G/\!\!/\G)} \otimes \QCoh(\G/\!\!/\G)^{\on{res.var.}}$$to both sides of Theorem \ref{t:mainthm}, we obtain its analogue with restricted variation, and deduce the corresponding version of Corollary \ref{c:maincor}. In particular, by passing to the appropriate direct summand monoidal categories, these results confirm a conjecture of Bezrukavnikov, namely \cite[Conjecture 58]{B} over the complex numbers. This provides an alternative to the endoscopic arguments of \cite{DLYZ}. 
\end{remark}

\begin{remark} In this paper, we have addressed the case of tame ramification in local Betti geometric Langlands. It is extremely natural to expect the existence of a wildly ramified local Betti correspondence, involving on the spectral side moduli spaces of Stokes data. To our knowledge no precise formulation of such a correspondence is as of yet available.    
\end{remark}

\end{document}